\theoremstyle{plain}
\newtheorem{theorem}{Theorem}[section]                                          
\newtheorem{lemma}[theorem]{Lemma}
\theoremstyle{definition}
\newtheorem{definition}[theorem]{Definition}
\theoremstyle{remark}
\newtheorem{remark}[theorem]{Remark}
\newtheorem{example}[theorem]{Example}
\numberwithin{equation}{section}
\newcommand{\cupdot}{\mathbin{\mathaccent\cdot\cup}}
\begin{document}

\title{\textsc{Random walks on weighted, oriented percolation clusters}}
\author{Katja Miller\footnote{Fakult{\"a}t f{\"u}r Mathematik, Technische Universit{\"a}t M{\"u}nchen, Boltzmannstr. 3, 85748 Garching, Germany, katja.miller@tum.de, \url{http://www-m14.ma.tum.de/en/people/miller/}.}}
\date{\today}

\maketitle

\let\thefootnote\relax
\footnote{\emph{Keywords:} Oriented percolation, random walks in random environment, central limit theorem, mixing conditions.}
\footnote{\emph{2000 Mathematics Subject Classification:} 60F05, 60K35, 60K37.}
\footnote{Research supported by Studienstiftung des deutschen Volkes.}

\vspace{0.7cm}

\noindent
\textsc{Abstract.} We consider a weighted random walk on the backbone of an oriented percolation cluster.  We determine necessary conditions on the weights for Brownian scaling limits under the annealed and the quenched law. This model is a random walk in dynamic random environment (RWDRE), where the environment is mixing, non-Markovian and not elliptic. We provide a generalization of results obtained previously by \cite{Birkner_et_al_2012}.

\vspace{0.5cm}

\section{Introduction}

Random walks in random environment (RWRE) are random walks whose transition kernels are not deterministic, but functions of some random field. This random field is called environment. We can interpret a RWRE as a two-stage experiment. At the first stage we determine the environment. Then, at the second stage, we determine the random walk with transition kernel depending on the environment. The law of both stages together is called \emph{annealed law}. If we keep the environment of the the first stage of the experiment fixed and only consider the experiment at the second stage,  we get the \emph{quenched} law of the RWRE. For an introduction to RWREs we refer to  lecture notes of \cite{Zeitouni_2004} for his course in Saint-Flour .

The environment can either be fixed during the evolution of the random walk, or it can be a stochastic process itself such that the transition kernels of the random walk change over time. The second case is called a random walk in dynamic random environment (RWDRE). Dynamic environments that have been studied in the past are i.i.d.~in \cite{Boldrighini_Minlos_Pellegrinotti_2004, Rassoul-Agha_Seppalainen_2005, Joseph_Rassoul-Agha_2011},
  exhibit small fluctuations in \cite{Bandyopadhyay_Zeitouni_2006}, are finite state Markov chains in \cite{Dolgopyat2009} or have some ellipticity and mixing properties in \cite{Andres_2014}. General ergodic Markovian environments which satisfy a coupling condition have been studied by \cite{Redig_Voellering_2013}. In a model by \cite{Dolgopyat_Keller_Liverani_2008} the random walk depends only weakly on the environment and some authors consider interacting particle systems as environments, e.g.~\cite{Birkner_et_al_2012}, \cite{Avena_Hollander_Reding_2010}, \cite{Hollander_Santos_2014} and \cite{Hilario_et_al_2014}. The relevance of some of these models for this paper is discussed in Section \ref{ssec: related_material}.

This paper is a generalization of a work by \cite{Birkner_et_al_2012}. They consider a directed random walk on an oriented percolation cluster, which can be considered as a RWDRE with a non-elliptic, non-reversible, Markovian random environment. Their random environment is the time reversal of a discrete time contact process. They prove a law of large numbers (LLN), an annealed central limit theorem (aCLT) and a quenched central limit theorem (qCLT). Their model describes simple population dynamics with local competition. Each site in the percolation cluster is considered habitable and can be occupied by at most one particle, while all other sites are unhabitable. The random walk on the habitable sites represents the ancestral line of one particle. In this paper we extend their model by allowing each site to be occupied by more than one particle.
We choose a carrying capacity for each site which is represented by a random field $K$. If a site is habitable in our model, then the species will populate this site with the maximal number of individuals allowed by $K$. We choose $K$ mixing so that it can model large scale features of different habitats like weather, soil conditions, altitude or seasons. The percolation cluster represents features of the habitat that only apply to a single site, e.g.~presence of a predator or shortage of food due to the presence of another species at a site. Thus our model is not only able to represent varying population densities, but also changes in habitats on two different scales. However, correlations in the habitat can only be allowed in the larger scale represented by the carrying capacities $K$.

We give conditions on the random field $K$ such that the LLN, aCLT and qCLT from \cite{Birkner_et_al_2012} still hold. We choose $K$ stationary, mixing and independent of the percolation cluster. The mixing property of the random field $K$ makes the environment non-Markovian and the percolation cluster makes it non-reversible. The random walk in our environment is a weighted random walk with weights $K$. The weights can be chosen in such a way that the weighted walk has a non-zero drift vector (see Example 3.2), while the unweighted walk has always vanishing speed.
The behaviour of a RWRE on the full lattice $\mathbb Z^d$ - with and without drift - is already interesting and not understood in all generality. We exploit the structure of the percolation cluster to establish these results for our model. 

\subsection{The Model}

The paper by \cite{Birkner_et_al_2012} provides a very detailed description of their model. We keep explanations in this section rather short and refer to their paper for a thorough discussion. We work on the discrete space  $V:=\mathbb Z^d\times \mathbb Z$, which we will refer to as the \emph{full lattice}. The first $d\geq 1$ dimensions in $V$ are \emph{space dimensions} and the last dimension is the \emph{time dimension}. We turn the lattice $V$ into an oriented graph $(V,E)$ with vertices $V$ by adding edges
\begin{align*}
	E:=\{|(x,n),(y,k)\rangle:(y,k)\in U^+(x,n))\},
\end{align*}
where $|(x,n),(y,k)\rangle$ denotes an oriented edge from $(x,n)$ to $(y,k)$ and
\begin{align}\label{eq:consecutive_sites}
	U^+(x,n):=\{(y,k)\in V: ||x-y||_\infty =1, k=n+1\}
\end{align}
is the set of consecutive vertices of $(x,n)$. The specific choice of the set of consecutive vertices $U^+$ is not important as long as it is finite and symmetric. 

Let $(\omega(x,n))_{(x,n)\in V}$ be a family of independent and identically distributed Bernoulli random variables with parameter $p\in(p_c,1]$ that represents a supercritical site percolation on the vertex set $V$. The constant $0<p_c<1$ is the critical probability of oriented site percolation on $(V,E)$. Existence and non-triviality of $p_c$ was proven in \cite{Grimmett_and_Hiemer_2002}. We say a site $(x,n)\in V$ is \emph{open}, if $\omega(x,n)=1$. Otherwise, we call it \emph{closed}. With the notion of open sites we can define open paths. A directed path on the oriented graph $(V,E)$ from vertex $(x,n)$ to vertex $(y,m)$ is called open, if all vertices on that path are open. For an open directed path from $(x,n)$ to $(y,m)$ we write $(x,n)\rightarrow(y,m)$. Analogously, we write $(x,n)\rightarrow \infty$ if there is an infinite, directed open path on $(V,E)$ starting in $(x,n)$. The percolation process $\xi^P:=(\xi^P_n)_{n\in\mathbb Z}$ is defined by
\begin{align*}
	\xi^P_n (x):=	\begin{cases} 	
				1 &\quad \text{if } (x,n)\rightarrow \infty \\
				0 &\quad \text{otherwise.}
			\end{cases}
\end{align*}
The backbone of the oriented percolation cluster is denoted by 
\begin{align*}
	\mathcal C:=\{(x,n)\in V: (x,n)\rightarrow \infty\}
\end{align*}
and is a proper subset of the oriented percolation cluster. It describes all sites that lie on an infinite directed open path on $(V,E)$.
On top of the percolation cluster we define weights $(K(x,n))_{(x,n)\in V}$ as a family of stationary $\mathbb R_{>0}$-valued random variables independent of $\omega$. It is important that the weights are strictly positive. We furthermore require the weights to be mixing. We now give the definition of the relevant mixing conditions used in this paper. For a brief overview on mixing conditions we refer the reader to the survey paper of \cite{Bradley_2005}.

\begin{definition}[Mixing conditions and mixing coefficients]\label{def_mixing}
Let $K$ be a random field on $V$. Denote by
\begin{align*}
	\sigma(K):=\sigma\left \{K(v): v\in V \right \}
\end{align*}
the $\sigma$-algebra of the weights and by 
\begin{align*}
	\mathrm{supp}(A):=\bigcap\left \{U\subset V: A \in \sigma(K(v): v\in U)\right \}
\end{align*}
the support of an event $A\in\sigma (K)$. Furthermore, we say that a set $C$ is a cone with apex in $(x,l) \in V$, if 
\begin{align*}
	C = \left \{ (y,k)\in V : k\geq l \text{ and } ||x-y||_\infty \leq |k-l|\right \}.
\end{align*}
This defines a cone with aperture $ \pi/2$.
\begin{enumerate}
\item We say that $K$ is \emph{$\alpha$-mixing (or strongly mixing) in space} w.r.t.~the law $\mathbb P$  if the mixing coefficients $(\alpha_n)_{n\in \mathbb N}$ satisfy $\alpha_n\xrightarrow{n\rightarrow\infty} 0$, where
\begin{align}\label{eq:mixing_property_alpha}
	\alpha_n := \sup &\left \{\left |\mathbb P(A\cap B)-\mathbb P(A)\mathbb P(B)\right |: \right . \\ \nonumber
		&\left . A,B\in\sigma(K), \mathrm{dist}^{\mathrm {s}}(\mathrm{supp}(A),\mathrm{supp}(B))>n\right \}
\end{align}
 and we take the distance in the first $d$ coordinates (space coordinates), i.e.~
\begin{align}
	\mathrm{dist}^{\mathrm {s}}(U,W)=\inf \left \{||x-y||_\infty:(x,n)\in U, (y,m)\in W \right \}.
\end{align}

\item We say that $K$ is \emph{$\phi$-mixing (or uniformly mixing) in time} w.r.t.~the law $\mathbb P$  if the mixing coefficients $(\phi_n)_{n\in \mathbb N}$ satisfy $\phi_n\xrightarrow{n\rightarrow\infty} 0$, where
\begin{align}\label{eq:mixing_property_phi}
	\phi_n := \sup &\left \{\left |\mathbb P(B|A)-\mathbb P(B)\right |: \right . \\ \nonumber
		&\quad \left . A,B\in\sigma(K), \mathbb P(A)>0, \mathrm{supp}(B)\subseteq C, C \text{ is a cone and } \right . \\ \nonumber 
		&\quad \left . \mathrm{dist}^{\mathrm {t}}(\mathrm{supp}(A),C)>n\right \}
\end{align}
and we take the distance in the  last coordinate  of $V$ (time coordinate),
\begin{align}
	\mathrm{dist}^{\mathrm {t}}(U,W)=\inf \left \{|n-m|:(x,n)\in U, (y,m)\in W \right \}.
\end{align}
\end{enumerate}
\end{definition}

We choose $K$ stationary, mixing and independent of $\omega$ for all our results. With this definition of the weights, the environment $\xi^K:=(\xi^K_n)_{n\in\mathbb Z}$ is given by the process
\begin{align}
	\xi^K_n (x):=	\begin{cases} 	
				K(x,n) &\quad \text{if } (x,n)\rightarrow \infty \\
				0 &\quad \text{otherwise.}
			\end{cases}
\end{align}
Since the weights $K$ are chosen to be strictly positive, the environment $\xi^K$ has zeros exactly at those sites where the percolation process $\xi^P$ has zeros and the percolation cluster is unchanged by the weights. Also, the percolation process $\xi^P$ is a Markov chain, while the environment process $\xi^K$ is not.

The random walk $(X_n)_{n\in\mathbb N}$ is defined on the environment $\xi^K$ as in the paper by \cite{Birkner_et_al_2012} for the case of i.i.d.~weights $K$, although in our paper the weights are not independent. We set $X_0=0$ and choose the transition kernel
\begin{align}\label{eq:trans_probabilities}
	{\mathbb P}(X_{n+1}=y|X_n=x, \omega, K)\propto K(y,n+1) \mathds 1_{\{(y,n+1)\in U^+(x,n)\cap\mathcal C\}}.
\end{align}
Set $\Omega:=\{0,1\}^{\mathbb Z^{d+1}}\times \mathbb R_{>0}^{\mathbb Z^{d+1}}\times \mathbb Z^{d}\times \mathbb N$ to be the sample space and equip it with the $\sigma$-algebra $\mathcal F:=\sigma (\omega(x,n), K(x,n), X_n:(x,n)\in V)$. We define two probability measures on this measurable space. 

\noindent
\fbox{
\begin{minipage}[t]{\textwidth - 4\fboxsep}
We denote by $\mathbb P$ the \emph{joint measure} of environment $\xi^K$ and the random walk $(X_n)$, which is the \emph{annealed} (or averaged) law. We write $\mathbb E$ for the expectation under the annealed law $\mathbb P$. Whenever we condition on the event $B_0:=\{(0,0)\in\mathcal C\}$, we denote this conditional law by a tilde, i.e.~$\tilde{\mathbb P}(\cdot):=\mathbb P(\cdot|B_0)$ and $\tilde{\mathbb E}(\cdot):=\mathbb E(\cdot|B_0)$.

We denote by $P_\xi$ the \emph{quenched} (or path-wise) law of the random walk, which is  $P_\xi(\cdot):=\mathbb P(\cdot|\xi^K)$. The expectation under the quenched law $P_\xi$ is denoted by $E_\xi$.
\end{minipage}
}

Throughout the paper, we choose to work with the supremum norm. The specific choice of a norm is not important for the results.  For any function $f:\mathbb N\rightarrow \mathbb R_+$ we write $\alpha_n\in\mathcal O(f(n))$ iff $\limsup \alpha_n/f(n)<\infty$ as $n\rightarrow \infty$. Finally, note that the constants $0<c,C<\infty$ are used in a generic sense and may take different values within the same set of equations.

\subsection{Results}

\begin{lemma}[LLN for polynomially time-mixing weights]\label{lem:lln}
Let $d\geq 1$ and $p\in(p_c, 1]$. If $K$ is independent of $\omega$, strictly positive, stationary and $\phi$-mixing in the time coordinate with mixing coefficients $\phi_n\in\mathcal O(n^{-(1+\delta)})$ for any $\delta>0$, then a LLN holds, i.e.~there is a constant $\vec \mu\in\mathbb R^d$ such that $||\vec \mu||_\infty<1$ and
\begin{align}
	P_{\xi}\left ( \frac{X_n}{n}\xrightarrow {n\rightarrow \infty} \vec\mu\right )=1 \quad \text{for } \tilde{ \mathbb P}\text{-a.e. }\xi^K.
 \end{align}
\end{lemma}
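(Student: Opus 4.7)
The approach is to import the regeneration-times framework of \cite{Birkner_et_al_2012} and adapt it to the non-i.i.d.\ setting of stationary mixing weights. The key observation driving the plan is that the construction of regeneration times depends only on the backbone $\mathcal C$ and on the walker's trajectory, so the structural properties of the $\tau_i$ and the percolation-based tail bounds on $\tau_{i+1}-\tau_i$ survive unchanged; what changes is only the joint law of successive regeneration increments.

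First I would fix regeneration times $0\le\tau_0<\tau_1<\cdots$ exactly as in \cite{Birkner_et_al_2012} and set $T_i:=\tau_{i+1}-\tau_i$, $Z_i:=X_{\tau_{i+1}}-X_{\tau_i}$. In the i.i.d.\ case these increments are i.i.d.; in our setting I would instead show that $(T_i,Z_i)_{i\ge 0}$ is stationary and ergodic under (a Palm version of) $\tilde{\mathbb P}$. Stationarity follows from the joint time-shift invariance of $(\omega,K)$ together with the translation-covariant definition of the $\tau_i$; ergodicity is inherited from ergodicity of the time-shift on $(\omega,K)$, which in turn holds because $\omega$ is i.i.d.\ and $K$ is $\alpha$-mixing (only $\alpha_n\to 0$ is needed here, so the polynomial rate $\mathcal O(n^{-(1+\delta)})$ is more than enough). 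Finiteness of $\tilde{\mathbb E}[T_0]$ is exactly the tail estimate of \cite{Birkner_et_al_2012}, which depends only on percolation and is insensitive to $K$, and then $\tilde{\mathbb E}[\|Z_0\|_\infty]\le \tilde{\mathbb E}[T_0]$ since every step has unit $\|\cdot\|_\infty$-norm.

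Birkhoff's ergodic theorem then yields $\tau_N/N\to \tilde{\mathbb E}[T_0]$ and $X_{\tau_N}/N\to \tilde{\mathbb E}[Z_0]$ $\tilde{\mathbb P}$-a.s., whence $X_{\tau_N}/\tau_N\to \vec\mu:=\tilde{\mathbb E}[Z_0]/\tilde{\mathbb E}[T_0]$, and the Lipschitz interpolation $\|X_{n+1}-X_n\|_\infty=1$ upgrades this to $X_n/n\to\vec\mu$ $\tilde{\mathbb P}$-a.s. The strict bound $\|\vec\mu\|_\infty<1$ comes from the fact that within a regeneration block the walker fails to advance by one in every coordinate with positive probability (the cluster forces occasional sideways or zig-zag steps), so $\|\tilde{\mathbb E}[Z_0]\|_\infty<\tilde{\mathbb E}[T_0]$. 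The quenched statement is then immediate: the annealed a.s.\ identity $\tilde{\mathbb P}(X_n/n\to\vec\mu)=1$ is by Fubini equivalent to $P_\xi(X_n/n\to\vec\mu)=1$ for $\tilde{\mathbb P}$-a.e.\ $\xi^K$.

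The main obstacle is the Palm-theoretic step establishing stationarity and ergodicity of $(T_i,Z_i)$: in \cite{Birkner_et_al_2012} this was a triviality thanks to independence, whereas in our setting one has to identify the correct conditional measure making the regeneration sequence stationary and verify that the time-mixing of $K$ is passed on to it. The specific decay rate of $\phi_n$ plays no role at this stage and will become essential only for the central limit theorems.
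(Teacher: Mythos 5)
Your overall route is the same as the paper's: build the regeneration structure of \cite{Birkner_et_al_2012} (via the local construction with auxiliary permutations), show the regeneration increments are stationary and ergodic under $\tilde{\mathbb P}$, apply Birkhoff's ergodic theorem plus a renewal argument, and read off $\vec\mu=\tilde{\mathbb E}[X_{T_1}]/\tilde{\mathbb E}[T_1]$. However, the step you yourself flag as ``the main obstacle'' is precisely the entire content of the paper's proof (its Lemma \ref{lem:increments_are_ergodic}), and the justification you sketch for it does not work as stated. Ergodicity of the increment sequence is \emph{not} simply inherited from ergodicity of the deterministic time-shift on $(\omega,K)$: the shift on $(Y_i,\tau_i)$ corresponds to shifting the environment by the \emph{random} space-time point $(X_{T_1},T_1)$, which depends on the walk, and then re-conditioning on the shifted origin lying in the backbone. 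The paper handles stationarity by factorizing the event $\{\gamma_{T_n}(T_n)=(x,l)\}$ into a past-measurable event $A'$ and the future event $\{(x,l)\rightarrow\infty\}$ (Equation \eqref{eq:split_regeneration_time}), and it obtains ergodicity not abstractly but by proving that the increment sequence is itself $\alpha$-mixing, with coefficients $\alpha^X_n\leq \mathbb P(B_0)^{-2}(\phi_{2mn}+2\alpha^P_{2mn})$, using the space-time mixing of $\xi^P$ (Lemma \ref{lem:environment_is_mixing}) in addition to the mixing of $K$. Without an argument of this type, stationarity and ergodicity of $(Y_i,\tau_i)$ under $\tilde{\mathbb P}$ are unproved, so the appeal to Birkhoff is not yet justified.

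A second concrete problem is your claim that ``only $\alpha_n\to 0$ is needed here.'' In the paper's mixing computation one must sum over all possible endpoints $(x,l)$ of the path at the regeneration time; the bound stays finite only because $\phi$-mixing of $K$ produces the factor $\mathbb P(A^N_{(x,l)})$ in each error term (see the remark after Equation \eqref{eq_error_term_1} and Equation \eqref{eq: sum_in_mixing_upper_bound_is_finite}). With plain $\alpha$-mixing of $K$ this sum is not controlled, so weakening the hypothesis at this point is not something you can assert without a genuinely different argument. The remaining ingredients of your plan (exponential/integrable tails of $T_1$ coming from the percolation structure alone, the interpolation between regeneration times, $\|\vec\mu\|_\infty<1$, and the Fubini step from the annealed to the quenched almost-sure statement) are fine and match the paper, but the proof is incomplete at its central step.
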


For the proof of the LLN we use a regeneration structure and we can express the drift vector $\vec \mu$ using regeneration times, see Equation \eqref{eq: drift_vector}.

\begin{theorem}[Annealed CLT for polynomially time-mixing weights]\label{thm:aCLT}
Let $d\geq 1$ and $p\in(p_c, 1)$. If $K$ is independent of $\omega$, strictly positive, stationary and $\phi$-mixing in the time coordinate with mixing coefficients $\phi_n\in\mathcal O(n^{-(2+\delta)})$ for some  $\delta>0$, then an aCLT holds, i.e.~for all continuous and bounded functions $f\in C_b(\mathbb R^d)$ 
\begin{align}
	\tilde{ \mathbb E}\left [ f\left ( \frac{(X_n-n\vec\mu)}{\sqrt n} \right )\right ]\xrightarrow{n\rightarrow\infty}\Phi(f),
\end{align}
where $\vec \mu$ is the same drift vector as in Lemma \ref{lem:lln}, $\Phi(f):=\int f(x)\Phi(\mathrm{d}x)$ and $\Phi$ is a non-trivial centred $d$-dimensional Gaussian law with full rank covariance matrix $\Sigma$.
\end{theorem}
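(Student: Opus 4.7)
The plan is to adapt the regeneration-based CLT of \cite{Birkner_et_al_2012} to the mixing-weights setting. Recall from the proof of Lemma \ref{lem:lln} the sequence of regeneration times $0 = T_0 < T_1 < T_2 < \ldots$ at which the walk enters a fresh percolation cone that is independent of its past in the $\omega$-variables. In the i.i.d.~weight case treated by \cite{Birkner_et_al_2012} the increments $Y_i := (X_{T_i} - X_{T_{i-1}}, T_i - T_{i-1})$ are i.i.d.~and a classical renewal CLT applies immediately. In our setting these increments instead form a stationary sequence whose weak dependence is inherited from $K$, and the core of the proof is to quantify this dependence and apply a CLT for weakly dependent sequences.

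First I would record the moment bound $\tilde{\mathbb E}[|Y_1|^{2+\varepsilon}] < \infty$ for some $\varepsilon > 0$; this should follow from a polynomial tail estimate on $T_1 - T_0$ of the kind already needed for Lemma \ref{lem:lln}. Next, since $K$ is independent of $\omega$ and the $\omega$-contents of successive regeneration slabs are independent, the joint distribution of $Y_i$ and $Y_{i+k}$ depends on $K$ only through the restrictions $K|_{S_i}$ and $K|_{S_{i+k}}$ to the (random) time supports $S_j$ of the $j$-th slab. On the event that these supports are separated in time by at least $n$, the $\phi$-mixing assumption on $K$ yields a bound of order $\phi_n$, and a truncation argument over the random supports (controlled by the tails of $T_1 - T_0$) shows that $(Y_i)$ is itself $\phi$-mixing with polynomial coefficients. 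Combined with the moment bound, a standard Ibragimov-type stationary-mixing CLT then gives the joint CLT for $(X_{T_n}, T_n)$ under $\tilde{\mathbb P}$. A routine sandwich argument transfers this to $X_n$: writing $N_n := \max\{i : T_i \le n\}$, one has $N_n = n/\tilde{\mathbb E}[T_1-T_0] + \mathcal O(\sqrt n)$, while $\max_{i \le N_n} |X_{T_i} - X_{T_{i-1}}| = o(\sqrt n)$ in $\tilde{\mathbb P}$-probability by the moment bound.

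The main obstacle is the transfer of mixing from $K$ to $(Y_i)$: the slab supports $S_i$ are random, so the deterministic-support mixing coefficient of $K$ cannot be invoked verbatim. A two-scale decomposition is needed. On the high-probability event that all slab lengths are below some growing threshold $L$, the supports can be localised in deterministic windows and the $\phi$-mixing bound for $K$ applies; on the complementary event the $(2+\varepsilon)$-moment absorbs the contribution. Matching $L$ against the required time separation and the polynomial mixing rate is precisely where the hypothesis $\delta > 0$ enters, and this is also where the proof genuinely differs from \cite{Birkner_et_al_2012}. The final point, non-degeneracy of $\Sigma$, is handled as in \cite{Birkner_et_al_2012}: at each step the walker has a genuine random choice among the open $U^+$-neighbours on $\mathcal C$, and the strict positivity of $K$ ensures that this transverse variance is preserved under reweighting, so that every linear projection of the centred walk has strictly positive limiting variance.
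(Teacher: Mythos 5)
The genuine gap is the non-degeneracy of $\Sigma$. Your closing argument --- at each step the walker has a genuine random choice among open neighbours, and strict positivity of $K$ preserves this transverse variance --- is the argument of \cite{Birkner_et_al_2012} for \emph{i.i.d.} regeneration increments, where the limiting covariance is just the covariance of a single increment. Here the increments are only stationary and mixing, so the asymptotic variance of any projection is the full series $\tilde{\mathbb E}\left [Z_0^2\right ]+2\sum_{k\geq 1}\tilde{\mathbb E}\left [Z_0Z_k\right ]$, and positivity of the first term alone does not exclude cancellation by the covariance sum; your proposal gives at best a possibly degenerate CLT. The paper's whole modification of the regeneration structure exists to close exactly this gap: regenerations are only allowed at points of $S_{2m}$, where the percolation forces the walk to return to the same space coordinate after $2m$ steps \emph{independently of $K$}. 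This inserts a buffer of length $2m$ between the $K$-dependent parts of consecutive increments, so the covariance inequality (Ibragimov--Linnik, Theorem 17.2.2) bounds the covariance series by $C(\phi_{2m}+\alpha^P_{2m})^{D/(D+2)}$, which is made smaller than $\tfrac12\tilde{\mathbb E}\left [Z_0^2\right ]$ by choosing $m$ large. This is also precisely where the hypothesis $p<1$ enters, via $\tilde{\mathbb P}\left (v\in S_{2m}\,\middle |\,v\in\mathcal C\right )\geq(1-p)^{2m(2d-1)}>0$; your proof never uses $p<1$, and the theorem is deliberately not claimed for $p=1$ because non-degeneracy is open there.

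On the mixing-transfer step your plan is workable but more complicated than needed: since by construction $T_{N+n}-T_N\geq 2mn$ deterministically, no truncation of random slab lengths is required. The paper decomposes over the endpoint $(X_N,T_N)=(x,l)$, writes the past event as $A^N_{(x,l)}\cap\{\xi^P_l(x)=1\}$, and uses the $\phi$-mixing of $K$ (the $\phi$-, not $\alpha$-, form is essential, because the resulting factor $\mathbb P(A^N_{(x,l)})$ makes the sum over endpoints finite) together with the exponential space-time mixing of $\xi^P$ to get $\alpha$-mixing of the increments with coefficients of order $\phi_{2mn}+\alpha^P_{2mn}$; note the increments come out $\alpha$-mixing, not $\phi$-mixing as you assert, which is still enough for the Ibragimov CLT. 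Also, the tails of $T_1$ and $\|Y_1\|_\infty$ are exponential (they come from the percolation geometry and the geometric number of trials, independent of $K$), not merely polynomial, so all moments needed are available; the sandwich/renewal step you describe is then standard, as in the paper.
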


While the LLN, Lemma \ref{lem:lln}, holds for $p=1$, we can prove the central limit theorems under the given mixing conditions only for $p<1$. For the aCLT on the full lattice, $p=1$, the main difficulty is to show non-degeneracy of the limit.

\begin{theorem}[Quenched CLT for exponentially space-time-mixing weights]\label{thm:qCLT}
Let $d\geq 2$ and $p\in(p_c, 1)$. If $K$ is independent of $\omega$, strictly positive,  stationary, $\phi$-mixing in the time coordinate with mixing coefficients $\phi_n\in\mathcal O(e^{-c_1n})$ and $\alpha$-mixing in space with mixing coefficients $\alpha_n\in\mathcal O(e^{-c_2n})$, $0<c_1,c_2<\infty$, then a quenched CLT holds with the same limit as in Theorem \ref{thm:aCLT}, i.e.~for all continuous and bounded functions $f\in C_b(\mathbb R^d)$ 
\begin{align}
	E_\xi\left [ f\left ( \frac{(X_n-n\vec\mu)}{\sqrt n} \right )\right ]\xrightarrow{n\rightarrow\infty}\Phi(f)\quad \text{for } \tilde{ \mathbb P}\text{-a.e. }\xi^K,
\end{align}
where $\vec \mu$ is the same drift vector as in Lemma \ref{lem:lln} and $\Phi$ is the same law as in Theorem \ref{thm:aCLT}.
\end{theorem}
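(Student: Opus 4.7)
The strategy is to deduce the quenched CLT from the aCLT by a second-moment argument. Since Theorem \ref{thm:aCLT} already establishes $\tilde{\mathbb E}[f(\tilde X_n)] \to \Phi(f)$ with $\tilde X_n := (X_n - n\vec\mu)/\sqrt n$, it suffices to show that the quenched expectation $E_\xi[f(\tilde X_n)]$ concentrates around its annealed mean under $\tilde{\mathbb P}$. I would do this by bounding the variance $\mathrm{Var}_{\tilde{\mathbb P}}\bigl(E_\xi[f(\tilde X_n)]\bigr)$ at a polynomial rate, apply Borel-Cantelli along a subsequence $n_k = k^\beta$ for $\beta$ chosen large enough, and then fill in between subsequence values by a simple increment estimate that uses $f \in C_b$.

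To bound the variance I introduce a pair of walks $X, X'$ that are independent under $P_\xi \otimes P_\xi$ but share the same environment $\xi^K$ and write
\[
  \mathrm{Var}_{\tilde{\mathbb P}}\bigl(E_\xi[f(\tilde X_n)]\bigr) = \tilde{\mathbb E}\bigl[(E_\xi \otimes E_\xi)[f(\tilde X_n)\,f(\tilde X'_n)]\bigr] - \bigl(\tilde{\mathbb E}[E_\xi[f(\tilde X_n)]]\bigr)^2,
\]
so the task reduces to proving that the joint annealed law of $(X, X')$ asymptotically factorises. For this I would adapt the joint regeneration construction of \cite{Birkner_et_al_2012} to the present setting, defining simultaneous regeneration times $T_1 < T_2 < \cdots$ at which both walks sit at the apex of infinite open space-time cones, and additionally requiring these two cones to be spatially disjoint and separated by at least $T_k^\gamma$ for some small $\gamma > 0$. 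The assumption $d \geq 2$ enters exactly here: a transience-type argument shows that two independent walks with the common annealed drift $\vec\mu$ separate diffusively, so the first joint regeneration with the required separation occurs at a time with exponentially decaying tails.

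Once the two walks lie in spatially well-separated cones, I would use the exponential $\alpha$-mixing of $K$ in space to decouple the weight field on the two cones up to an exponentially small error, and the exponential $\phi$-mixing of $K$ in time to decouple contributions across successive joint regeneration blocks. Combined with the exponential cone and backbone tail estimates for the supercritical oriented percolation cluster available for $p \in (p_c, 1)$, the joint regeneration increments become approximately i.i.d.\ pairs under the annealed law with an exponentially small coupling error. Plugging this into the variance expression and invoking a two-dimensional CLT for the resulting block sums yields a polynomial decay of $\mathrm{Var}_{\tilde{\mathbb P}}\bigl(E_\xi[f(\tilde X_n)]\bigr)$, with the same Gaussian limit $\Phi$ as in Theorem \ref{thm:aCLT}.

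The main obstacle, compared to the i.i.d.-weights setting of \cite{Birkner_et_al_2012}, is that the field $K$ destroys the Markov property of the environment: after any putative regeneration time the future $K$-field is merely mixing with respect to its past, not independent. Quantifying this approximate independence through the exponential mixing hypotheses and showing that the resulting errors remain summable across all joint regeneration blocks is the main technical step; it is also precisely what forces the assumption of exponential rather than polynomial mixing in both space and time, as well as the restriction to $d \geq 2$ needed to generate the spatial separation that activates the spatial mixing of $K$.
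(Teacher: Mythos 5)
Your proposal takes essentially the same route as the paper: a second-moment argument comparing a pair of walks on the same environment with walks on independent copies of the environment, joint regeneration times, a separation argument for $d\ge 2$, exponential space-time mixing of $K$ (together with the mixing of the percolation cluster) to bound the coupling error, a polynomial decay rate for the resulting variance, and Borel--Cantelli along subsequences with interpolation, exactly as in the adaptation of \cite{Birkner_et_al_2012} carried out via Lemmas \ref{lem:tv_distance}--\ref{lem:Birkner_3_10}. The only caveat is that for $d=2$ the difference walk is recurrent, so the paper replaces your ``transience-type, exponentially fast separation'' step by quantitative annulus-escape estimates (Lemma \ref{lem:escape_time}) and a separation lemma with stretched-exponential bounds (Lemma \ref{lem:separation_lemma}), which still yield the polynomial decoupling rate $n^{-b}$ your argument needs.
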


Again, the qCLT holds with full rank covariance matrix $\Sigma$ for $p=1$ only under some additional assumptions, see Section \ref{ssec: related_material}.

\begin{remark} Since the publication of the paper, I have become aware that the definition of $\phi$-mixing for random fields as it is in Definition \ref{def_mixing} is in fact equivalent to finite dependence as explained in \cite{Bradley_1989}. However, we use the definition only in a context, where one of the two sets is cone-shaped. Thus, the theorems hold in fact, if the weights $K$ are uniformly cone-mixing in time.
\end{remark}

\subsection{Related Material}\label{ssec: related_material}
There are many closely related works that cover models similar to ours. A list of papers together with a brief description can be found in \cite{Birkner_et_al_2012}. Our environment $\xi^K$ is neither elliptic, nor reversible, Markovian or stationary with respect to $\tilde{\mathbb P}$. However, the environment is mixing and the environment seen from the particle is asymptotically stationary for constant weights $K\equiv 1$, which was shown by \cite{Steiber_15}. The first observation is used in the proofs in this paper, the second could result in alternative proofs of the aCLT for our model using standard methods, see e.g.~\cite{Zeitouni_2004}. There is also a second generalization of the underlying model by the authors of  \cite{Birkner_et_al_2012} themselves. They consider an environment, which is the time reversal of a Markov process generated by oriented percolation \cite{Birkner_et_al_2015}.

Our model falls also into the class of dynamic random conductance models, which is the classical set-up for RWDRE. On the full lattice, $p=1$, the environment is reversible and we can express the conductances in terms of our weights $K$. For any fixed time $n\in\mathbb N$, we define conductances in space between two neighbouring sites $x,y\in\mathbb Z^d$ by $c(x,y)= K(x,n)K(y,n)$. We get conductances for every time-slab that change dynamically in the time-coordinate. If we choose $K$ i.i.d.~we get a 2-dependent random conductance model.

We are interested in comparing results for weighted random walks on the percolation cluster, $p<1$, with weighted random walks on the full lattice, $p=1$. This provides us with a better understanding of the role of the backbone $\mathcal C$. If the weights $K$ are stationary and $\phi$-mixing with $\phi_n\in\mathcal O(n^{-(2+\delta)})$ for some $\delta>0$, then on the full lattice the aCLT holds directly by applying a central limit theorem for stationary, mixing sequences, e.g.~Theorem 18.5.2 in \cite{Ibragimov_Linnik_1971}. However, the limit law can be degenerate under these assumptions. Non-degenerate aCLTs were proven under more restrictive assumptions. For example \cite{Dolgopyat2009} treated the case where the conductances are i.i.d.~in the space dimensions and a finite state Markov chain in time. A more recent paper in this context is from \cite{Andres_2014}. He admits space-time mixing, non-Markovian environments, but requires bounded conductances.

We are also interested in counterexamples to show that our mixing condition in Theorem \ref{thm:aCLT} is sharp. We want to find $K$ such that the mixing condition of Theorem \ref{thm:aCLT} does not hold, i.e.~$\phi_n\notin\mathcal O(n^{-(2+\delta)})$ for any $\delta>0$, and there is no non-degenerate aCLT with Brownian scaling.  The following two examples apply to $p=1$ only and exhibit very strong traps for the random walk. Ideally we want examples that hold in the percolation case $p<1$, which is far more difficult.

The first example is from a paper by \cite{Berger_Salvi_2013}, who build on a construction by \cite{bramson2006} to show that it is possible to construct unbounded and mixing static random conductances such that a LLN does not hold. This construction can be applied to our dynamic model as well. Their conductances are polynomially mixing of order one in space and time, so the mixing coefficients are in $\mathcal O(n^{-1})$, but not in $\mathcal O(n^{-(1+\delta)})$ for any $\delta>0$. This example suggests that the condition on the mixing coefficients in Theorem \ref{thm:aCLT} is sharp, but for a proof we would need to make it work for $p<1$ as well.
Another interesting dynamic conductance model is described by  \cite{Buckley_2013}. He models the weights on the edges as independent, infinite-state Markov chains. The model is mixing in time and for large $n$ the mixing coefficients can be bounded below by $1/n$. As in the first example the mixing is slower than $\mathcal O(n^{-(1+\delta)})$ for any $\delta>0$.

These examples use weights to build traps for the random walker and force it into irregular behaviour for a long enough amount of time. The additional percolation cluster in our model helps the random walker to exit traps early. Since the percolation cluster is independent of the weights and has the ability to force the walker along the cluster it can create exit paths from traps formed by the weights. Consequently it is not possible to adapt the previous examples for our model. This also explains why we can prove non-degeneracy on the percolation cluster easier than on the full lattice.

\section{Mixing Properties of the Environment}

The key ingredient of our proofs is the mixing property of the percolation structure and environment. We will use it to show that we can define a regeneration structure, which is mixing itself, such that standard results for stationary, mixing sequences of random variables apply.

\begin{lemma}[The environment is mixing]\label{lem:environment_is_mixing}
Let $d\geq 1$ and $K$ be stationary and independent of $\omega$.
\begin{enumerate}
\item The processes $\xi^P$ and $\xi^K$ are stationary under the law $\mathbb P$.
\item The processes $\xi^P$ is mixing in space-time under the law $\mathbb P$ in the following sense: Fix $n\in \mathbb N$.  Let  $V_B\subset V$ be any cone shaped subset of $V$, i.e.~there is a site $(x, l)\in V$ and angle $\beta \in [\pi/4, \pi/2]$ such that
\begin{align}
	V_B:=\{(y,k)\in V: k\geq l \text{ and } ||x-y||_\infty\leq |k-l| \tan (\beta) \}.
\end{align} 
Let $V_A\subset V$ be such that $L:=|V_A|<\infty$ and $\mathrm{dist}(V_A, V_B)\geq n$. Then there exist constants $0<c,C<\infty$ such that for any two events $A,B\in \sigma(\xi^P_k(y):(y,k)\in V)$ with $\mathrm{supp}(A) \subseteq V_A$ and $\mathrm{supp}(B) \subseteq V_B$ we have 
\begin{align}\label{eq: mixing_condition_for_the_environment}
	\alpha_n^P(A,B):=\left | \mathbb P(A\cap B)-\mathbb P(A) \mathbb P(B) \right | \leq C  2^L L^2 e^{-cn}.
\end{align}
\end{enumerate}
\end{lemma}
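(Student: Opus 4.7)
For part (i), stationarity of $\xi^P$ is essentially bookkeeping: $\omega$ is an i.i.d.~Bernoulli field, the event $\{(x,n)\to\infty\}$ depends on $\omega$ only through a shift by $(x,n)$, and shifts preserve the law of $\omega$, so $\xi^P$ is a stationary random field. Since $K$ is stationary and independent of $\omega$, the pointwise product $\xi^K_n(x) = K(x,n)\,\xi^P_n(x)$ is stationary as well.

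For part (ii) the plan is to exploit the exponential decay of correlations in supercritical oriented site percolation to approximate the non-local event $A$ by a genuinely local one, and to observe that $B$ is already local thanks to the cone geometry of $V_B$. Concretely, define the finite-range proxy
\begin{equation*}
  \tilde\xi^{P,N}_k(y) := \mathds 1\bigl\{(y,k) \text{ is connected by an open directed path to some site at time } k+N\bigr\},
\end{equation*}
which depends only on $\omega$ in the forward cone $\{(z,m):k\le m\le k+N,\ \|y-z\|_\infty\le m-k\}$, and clearly $\xi^P_k(y)\le \tilde\xi^{P,N}_k(y)$. The crucial external input is the Grimmett--Hiemer type exponential estimate for supercritical oriented percolation, namely
\begin{equation*}
  \mathbb P\bigl(\tilde\xi^{P,N}_k(y) = 1,\ \xi^P_k(y) = 0\bigr) \le C e^{-cN},
\end{equation*}
i.e., a cluster that survives $N$ layers but ultimately dies out is exponentially unlikely in $N$. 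This is the only non-trivial ingredient I borrow; everything else is structural.

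Next, any event $A\in\sigma(\xi^P_k(y):(y,k)\in V_A)$ is a disjoint union of at most $2^L$ cylinder events $\{\xi^P|_{V_A}=\omega_A\}$, $\omega_A\in\{0,1\}^{V_A}$. Replacing each cylinder by the analogous cylinder for $\tilde\xi^{P,N}|_{V_A}$ produces an event $\tilde A^N$ that depends on $\omega$ only inside the union of the $L$ forward cones of height $N$ from $V_A$, and union bounds combined with the exponential estimate give $\mathbb P(A\triangle \tilde A^N)\le C\, 2^L L\, e^{-cN}$. For the event $B$, the hypothesis $\beta\ge \pi/4$ means $\tan\beta\ge 1$, and a direct calculation shows that the forward cone from any $(y,k)\in V_B$ stays inside $V_B$; therefore $B\in\sigma(\omega(z,m):(z,m)\in V_B)$ without any approximation at all.

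Finally, choose $N=\lfloor n/2\rfloor$. Then the $\omega$-support of $\tilde A^N$ and the set $V_B$ are disjoint (since $\mathrm{dist}(V_A,V_B)\ge n > N$), so by independence of $\omega$ on disjoint subsets, $\tilde A^N$ and $B$ are independent under $\mathbb P$. The triangle inequality
\begin{equation*}
  \bigl|\mathbb P(A\cap B) - \mathbb P(A)\mathbb P(B)\bigr| \le 2\,\mathbb P(A\triangle \tilde A^N)
\end{equation*}
then delivers the claimed bound, the additional factor of $L$ in the stated $C 2^L L^2 e^{-cn}$ being easily absorbed into the constants of the percolation tail estimate. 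The main obstacle is the exponential estimate for $\{\tilde\xi^{P,N}_k(y)\neq \xi^P_k(y)\}$; once that classical supercritical-percolation input is cited, the rest of the argument is union bounds together with the cone-preservation identity $\tan\beta\ge 1$.
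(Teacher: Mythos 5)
Your proof is correct, and it rests on the same two pillars as the paper's: the exponential estimate that a site whose cluster survives $N$ time layers but does not percolate has probability at most $Ce^{-cN}$ (the paper invokes this as Lemma A.1 of Birkner et al., where you call it a Grimmett--Hiemer type bound --- same fact, slightly different attribution), and the observation that $\tan\beta\ge 1$ makes forward cones from points of $V_B$ stay inside $V_B$, so that $B$ is measurable with respect to $\omega$ restricted to $V_B$ and independence of the i.i.d.\ field on disjoint regions can be used. Where you genuinely diverge is in how the general event $A$ is treated. The paper works its way up through an event hierarchy: first single-site events $\{\xi^P_{k_1}(x_1)=0\}$, then all-zero cylinders by subadditivity (cost $L$), then mixed cylinders by repeated disjoint-union/triangle-inequality manipulations (cost $L^2$), and finally arbitrary $A$ as a disjoint union of at most $2^L$ cylinders --- this bookkeeping is precisely what produces the stated factor $2^L L^2$. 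You instead approximate the whole restricted field at once by the depth-$N$ proxy $\tilde\xi^{P,N}$, note $\xi^P\le\tilde\xi^{P,N}$, and use
\begin{align*}
 A\,\triangle\,\tilde A^N\subseteq\bigcup_{(y,k)\in V_A}\bigl\{\tilde\xi^{P,N}_k(y)=1,\ \xi^P_k(y)=0\bigr\},\qquad
 \bigl|\mathbb P(A\cap B)-\mathbb P(A)\mathbb P(B)\bigr|\le 2\,\mathbb P\bigl(A\,\triangle\,\tilde A^N\bigr),
\end{align*}
with $N=\lfloor n/2\rfloor<n$ guaranteeing disjoint supports. This is cleaner and in fact stronger than the paper's bound: the symmetric difference is controlled by a union bound over the $L$ sites of $V_A$ regardless of which of the $2^L$ cylinders make up $A$, so you get order $L\,e^{-cn/2}$ (your stated $2^L L$ is an overcount, but harmless since the target allows $2^L L^2$). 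The only caveats are cosmetic: a boundary case when $\mathrm{dist}(V_A,V_B)$ equals exactly $n$ (fixable by taking $N\le n-1$ or adjusting constants), and the attribution of the death-time estimate, which should point to the supercritical oriented percolation literature via Lemma A.1 of Birkner et al.\ rather than to Grimmett--Hiemer, who are cited here only for the non-triviality of $p_c$.
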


\begin{remark} The environment is not stationary under the conditional law $\tilde{\mathbb P}$. However, the environment is mixing under  the conditional law $\tilde{\mathbb P}$, since the event $B_0$ can either be included in the event $A$ or in the event $B$ in Lemma \ref{lem:environment_is_mixing}. Therefore Equation \eqref{eq: mixing_condition_for_the_environment} holds with constants $L'=L+1$ and $c'=c/2$ also for $\tilde{\mathbb P}$.
\end{remark}

\begin{proof}[Proof of Lemma \ref{lem:environment_is_mixing}]
\emph{(i)} The process $\xi^P$ is stationary with respect to $\mathbb P$, which follows from the fact that the time-reversed process is a stationary discrete time contact process as explained in \cite{Birkner_et_al_2012}. The environment $\xi^K$ is stationary with respect to $\mathbb P$, since it is the product of two independent stationary processes. 

\emph{(ii)} First, define the length of the longest path on the oriented percolation cluster given by $\xi^P$  and starting in some point $(y,k)\in V$ by
\begin{align}\label{eq: length_of_longest_path}
	l(y,k):=\sup\{n\geq 1: \exists (y',k+n)\in V: (y,k)\rightarrow (y',k+n)\}.
\end{align}
Note that $l(y,k)=\infty$ if $(y,k)\in\mathcal C$. Define a subset $V_B\subset V$ and event $B$ as in Lemma \ref{lem:environment_is_mixing}. We will successively consider more complicated events for $A$. To begin with, let the second event be $A_1:=\{\xi^P_{k_1}(x_1)=0\}$ for some $(x_1,k_1)\in V$ such that $\mathrm{dist}(\{(x_1,k_1)\}, V_B)\geq n$. By Lemma A.1 in \cite{Birkner_et_al_2012} we know that
\begin{align}\label{eq: finite_open_path_length}
	\mathbb P\left (A_1 \cap \{l(x_1,k_1)\geq n\} \right )\leq Ce^{-cn}.
\end{align}
The event $\{l(x_1,k_1)< n\}\cap A_1$ is measurable with respect to $\sigma(\omega(v): v\in V\setminus V_B)$ and therefore independent of $B$. We can write
\begin{align*}
	\mathbb P(A_1\cap B)&=\mathbb P(A_1\cap B\cap \{l(x_1,k_1)<n\})+\mathbb P(A_1\cap B\cap \{l(x_1,k_1)\geq n\}) \\
	&\leq  \mathbb P(B)\mathbb P(A_1\cap \{l(x_1,k_1)<n\}) + \mathbb P(A_1\cap \{l(x_1,k_1)\geq n\}) \\
	&\leq  \mathbb P(B)\mathbb P(A_1) + \mathbb P(A_1\cap \{l(x_1,k_1)\geq n\}) 
\end{align*}
and similarly
\begin{align*}
	\mathbb P(A_1\cap B)&\geq\mathbb P(B)\mathbb P(A_1\cap \{l(x_1,k_1)<n\}) \\
	&=  \mathbb P(B)\left (\mathbb P(A_1) - \mathbb P(A_1\cap \{l(x_1,k_1)\geq n\}) \right )\\
	&\geq \mathbb P(B)\mathbb P(A_1) -\mathbb P(A_1\cap \{l(x_1,k_1)\geq n\}).
\end{align*}
We conclude, using Equation \eqref{eq: finite_open_path_length}, that 
\begin{align}
	\alpha_n^P(A_1,B) = \left | \mathbb P(A_1\cap B) -\mathbb P(A_1)\mathbb P(B)\right | \leq Ce^{-cn}.
\end{align}
The same upper bound follows for $\alpha^P_n(A_1^c,B) $ with $A_1^c:=\{\xi^P_{k_1}(x_1)=1\}$, if we use that
\begin{align*}
	\mathbb P(A_1^c)&=1-\mathbb P(A_1) \text{ and }\\
	\mathbb P(B\cap A_1^c)&=\mathbb P(B)-\mathbb P(B\cap A_1).
\end{align*}
We want to generalize this result to events that have support of more than one point. Consider events of the form 
\begin{align*}
	A_L^0:=\left \{\xi_{k_1}^P(x_1)=0\right \}\cap\ldots\cap \left \{\xi^P_{k_L}(x_L)=0\right \}
\end{align*}
for $L$ points $(x_1, k_1), \ldots, (x_L, k_L)\in V$ such that 
\begin{align*}
	\mathrm{dist}\left (\left \{(x_1, k_1), \ldots, (x_L, k_L) \right \}, V_B\right )\geq n
\end{align*}
By subadditivity, using the same steps as before, we get
\begin{align}\label{eq: mixing_for_all_zeros_event}
	\alpha_n^P(A_L^0, B)\leq CLe^{-cn}.
\end{align}
Observe that an arbitrary event of the form 
\begin{align*}
	A_L^s:=\left \{\xi^P_{k_1}(x_1)=s_1\right \}\cap\ldots\cap\left \{\xi^P_{k_L}(x_L)=s_L\right \}
\end{align*}
for any $s:=(s_1, \ldots, s_L)\in\{0,1\}^L$ can be written as the disjoint union of two events of the form $A_{L+1}$. For example $A_1^0=A_2^{(0,0)}\cupdot A_2^{(0,1)}$. Since we have already established the mixing property for events $A_1^0$ and $A_2^{(0,0)}$ in Equation \eqref{eq: mixing_for_all_zeros_event}, we can use the triangle inequality to get the mixing property for $A_2^{(0,1)}$. The same argument allows us to derive the bounds for arbitrary sets $A_L^s$, where we have to pay a price on the upper bound for each time we apply the triangle inequality. After adding all the upper bounds of the appearing terms, we get
\begin{align}
	\alpha_n^P(A_L, B)\leq CL^2e^{-cn}.
\end{align}
Finally, it remains to observe that any event $A$, with  $L=|V_A|$, can be written as a disjoint union of at most $2^L$ events of the type $A_l^s$, $1\leq l \leq L$ and the claim follows.
\end{proof}

\section{The Law of Large Numbers}

The process $\xi^K$ is not stationary with respect to $\tilde{\mathbb P}$, so we need to use a regeneration structure that has stationary increments.
The definition of the appropriate regeneration structure is similar to the case of i.i.d.~weights $K$ in \cite{Birkner_et_al_2012}. It uses additional random permutations to achieve a local construction of the random walk. For every $(x,n)\in V$ we let $\tilde \omega (x,n)$ be a random permutation of sites in $U^+(x,n)$, which is chosen from the set of all permutations according to the law
\begin{align}
	\mathbb P\left (\left . \tilde\omega(x,n)=(y_1, \ldots, y_N) \right | K \right )=\prod_{l=1}^{2^d} \frac{K(y_l,n+1)}{\sum_{k=l}^N K( y_k,n+1)}.
\end{align}
The sum runs over all consecutive vertices of $(x,n)$. The number of consecutive vertices $|U^+(x,n)|=2^d$ is the number of corners in a $d$-dimensional hypercube. Our construction of the local path will be measurable with respect to the $\sigma$-algebra of all weights and permutations in the time interval of interest, 
\begin{align}
	\mathcal G_n^m:=\sigma \left (\omega(y,k), \tilde \omega(y,k), y\in\mathbb Z^d, n\leq k <m \right ).
\end{align}
We need to know the length of the longest open path $l(x,n)$ starting at $(x,n)$. Then $l_k(x,n):=l(x,n)\wedge k$ is measurable with respect to $\mathcal G_n ^{n+k+1}$. For the local construction of the path we furthermore need the set of possible next steps if we want to stay on paths, which have at least length $k$. For any $k\geq -1$, we define this set as
\begin{align}
	M_k(x,n):=\begin{cases}  
				U^+(x,n) \quad & \text{if } k=-1, \\
				\left \{v\in U^+(x,n):l_k(v)=\max_{z\in U^+(x,n)} l_k(z)  \right \} \quad & \text{otherwise.}
			\end{cases}
\end{align}
Finally, we complete our auxiliary notation by choosing $m_k(x,n)\in M_k(x,n)$ to be the first element in the permutation $\tilde\omega(x,n)$. Given a percolation $\omega$, a permutation $\tilde\omega$ and a starting point $(x,n)\in V$ we finally define the local path $\gamma_k=\gamma_k^{(x,n)}$ by
\begin{align}
	\gamma_k(j):=	\begin{cases}
					(x,n)\quad & \text{if } j=0, \\
					m_{k-j-1}(\gamma_k(j-1)) \quad &\text{if } j=1,2,\ldots.
				\end{cases}
\end{align}
The law of the local path $(\gamma_\infty^{(x,n)}(j))_{j\geq 0}$ is the same as the law of the random walk $(X_j, n+j)_{j\geq 0}$ by Lemma 2.1 in \cite{Birkner_et_al_2012}. A more detailed description of this construction and a picture can be found in their paper as well.

For $p<1$ we want the set $S_{2m}$ to contain all sites $(x,n)\in V$ for which every directed open path returns to the space coordinate $x$ after $2m$ steps, 
\begin{align}
	S_{2m}:=\left\{ (x,n)\in V: (x,n)\rightarrow (x,n+2m),\ {\mathbb P}(X_{n+2m}=x|X_n=x)=1\right \}.	
\end{align} 
Note that we get a strictly positive lower bound on the probability that any site $(x,n)\in V$ is in this set conditioned that it is on the backbone $\mathcal C$ by considering a single path. Since it is already on the backbone we only need to make sure that all sites that are adjacent to the single path are closed, i.e.~if $p<1$
\begin{align}\label{eq: lower_bound_on_regeneration_path}
	\tilde{\mathbb P}\left (\left . v\in S_{2m}\right |v\in\mathcal C\right )\geq (1-p)^{2m(2d-1)}>0.
\end{align}
Our definition of the regeneration times differs from the paper of \cite{Birkner_et_al_2012} in the additional requirement that a regeneration can only happen at points in $S_{2m}$. Define the \emph{regeneration times} recursively by $T_0=0$ and 
\begin{align}\label{eq:regeneration_times}
	T_n:=\inf\left \{ k\geq T_{n-1}+2m:\gamma_{k-2m}(k-2m)\in\mathcal C\cap S_{2m} \right \}.
\end{align}
The corresponding regeneration increments are
\begin{align*}
	\tau_n:=T_n-T_{n-1}\quad \text{and}\quad Y_n:=X_{T_n}-X_{T_{n-1}}.
\end{align*}
The regeneration times are those times at which the local construction discovers a point that is in the backbone and is followed by an episode in the percolation cluster that forces the random walk to return after $2m$ steps independent of the weights $K$. Since behaviour of the random walk during these episodes does not depend on the weights $K$ it can be used to decrease dependency between regeneration increments by increasing $m$. Later in the proof we will choose $m$ large, see Equation \eqref{eq:m_arbitrary_makes_cc_vanish}, to show that the covariance matrix has full rank.

The regeneration times are not measurable with respect to the past of the environment $(\mathcal G_0^n)_n$. The local construction allows us to define \emph{potential regeneration times} $(\sigma_k)_{k\geq 0}$  for the $(i+1)$th regeneration by $\sigma_0=T_i$ and 
\begin{align}
\sigma_{k+1}=\sigma_k+l(\gamma_{\sigma_k}(\sigma_k))+2.
\end{align}
The potential regeneration times are those times at which the local construction discovers that a local path was finite and jumps to another branch, see Figure \ref{fig_regeneration_structure}. They  are $(\mathcal G_0^n)_n$-measurable and therefore stopping times. We only need to check at potential regeneration times whether all conditions for a regeneration are met. With this procedure we achieve minimal dependence on the future.

\begin{figure}[h]
\centering
\includegraphics[width=\textwidth]{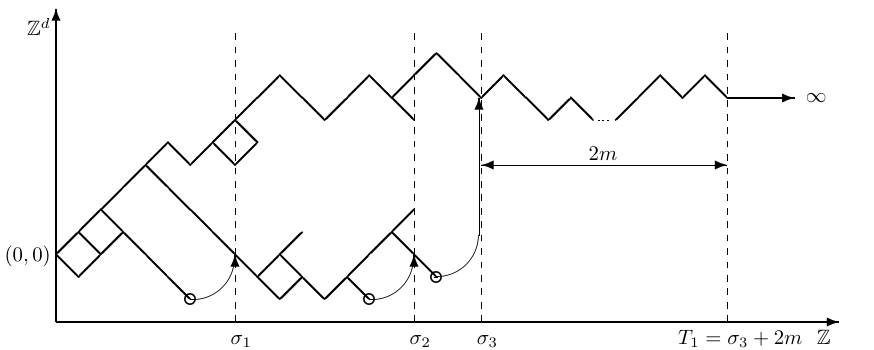}
\caption{Example for the regeneration structure in dimension $d=1$. The vertex set $V$ is not shown. The visible edges are those that can be reached from the origin $(0,0)$ by visiting open sites only. These edges are in the oriented percolation cluster of the origin. For a better visualization the permutations $\tilde\omega$ are chosen non-randomly and such that sites with smaller space coordinates are visited first. The local construction discovers three finite branches of the cluster before finding a regeneration time $T_1$. The end of each of these branches is marked by a circle. Afterwards the local discovery of the cluster is continued at the sites marked by the thin arrows. In this example only the topmost branch is connected by an open path to infinity and thus is in the backbone $\mathcal C$.}
\label{fig_regeneration_structure}
\end{figure}

\begin{lemma}[Increments of the random walk are ergodic]\label{lem:increments_are_ergodic}
Let $d\geq 1$, $K$ be independent of $\omega$, stationary and $\phi$-mixing in the time coordinate with mixing coefficients $(\phi_n)_{n\in\mathbb N}$. Then the process $(Y_n, \tau_n)_{n\in\mathbb N}$ is stationary and $\phi$-mixing with respect to $\tilde{\mathbb P}$ with mixing coefficients
\begin{align} 
	(\phi_n^X)_{n\in\mathbb N}=(\phi_{2mn} + 2\alpha^P_{2mn})_{n\in\mathbb N},
\end{align}
where $\alpha^P_n=Ce^{-cn}$, $n\in\mathbb N$ are the mixing coefficients for $\xi^P$ from Lemma \ref{lem:environment_is_mixing}, Equation \eqref{eq: mixing_condition_for_the_environment}.
\end{lemma}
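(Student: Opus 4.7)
The plan is to prove stationarity and the mixing bound separately, leveraging the regeneration structure together with the mixing properties of $\xi^P$ (Lemma \ref{lem:environment_is_mixing}) and of $K$.

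For stationarity, the key observation is that at each regeneration time $T_n$ the walk sits at a site $X_{T_n} \in \mathcal{C} \cap S_{2m}$, the subsequent $2m$ steps are prescribed by the cluster alone (independent of $K$), and the local construction then restarts in the environment translated by $(X_{T_n}, T_n)$. Since $\omega$, $K$ and the auxiliary permutations $\tilde{\omega}$ are all stationary under $\mathbb{P}$, and since both the local construction and the regeneration criterion are translation-invariant, the law of the shifted construction, conditioned on $\{X_{T_n} \in \mathcal{C}\}$ (automatic from the definition of $T_n$), equals $\tilde{\mathbb{P}}$. Hence $(Y_{n+1},\tau_{n+1},\ldots) \stackrel{d}{=} (Y_1,\tau_1,\ldots)$ under $\tilde{\mathbb{P}}$.

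For the mixing bound, consider events $A \in \sigma(Y_1,\tau_1,\ldots,Y_i,\tau_i)$ and $B \in \sigma(Y_{i+n},\tau_{i+n},\ldots)$; since every $\tau_j \geq 2m$, the time gap between their supports is at least $2mn$. I would bound the dependence by two pieces. Conditional on the percolation configuration, $A$ depends on $K$ in a time region ending by $T_i + 2m$ and $B$ on $K$ in a region starting at $T_{i+n}$, so the $\phi$-mixing of $K$ in the time coordinate contributes the term $\phi_{2mn}$. For the percolation side, the cone-mixing estimate $\alpha_n^P \leq C e^{-cn}$ from Lemma \ref{lem:environment_is_mixing} yields the $\alpha^P_{2mn}$ term; the factor $2$ comes from the conditioning on $B_0$ as noted in the remark following Lemma \ref{lem:environment_is_mixing}.

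The chief obstacle is that $A$ and $B$ are \emph{not} measurable on bounded time slabs: the backbone condition $v \in \mathcal{C}$ involves an infinite look-ahead, as do the tests $\{\gamma_k(k) \in \mathcal{C}\}$ used to locate regeneration times. I plan to handle this by truncation: replace $\mathcal{C}$ by the event that an open path of length at least $M$ starts at $v$, for $M$ proportional to $mn$. The approximation error is controlled by the estimate already used in the proof of Lemma \ref{lem:environment_is_mixing} (Equation \eqref{eq: finite_open_path_length}, from Lemma A.1 of \cite{Birkner_et_al_2012}), and decays like $C e^{-cM}$. After truncation the events live on disjoint bounded time slabs, so applying $\phi$-mixing of $K$ and the cone-mixing of $\xi^P$ gives the stated bound after absorbing the truncation error into the constants hidden in $\alpha_n^P$.
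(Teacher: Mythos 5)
Your stationarity sketch follows essentially the paper's route (translation invariance of $(\omega,K,\tilde\omega)$ plus splitting the regeneration event into a part measurable below the regeneration level and the backbone condition at the regeneration point), but your mixing argument has a genuine gap. The mixing coefficients of $K$ are defined for pairs of events supported on \emph{deterministic} subsets of $V$ at prescribed distance, whereas your events $A$ and $B$ live on time regions bounded by $T_i$ and $T_{i+n}$, which are random and themselves depend on $K$ (through the permutations $\tilde\omega$) as well as on $\omega$; conditioning on the percolation configuration does not remove this dependence, since the local path is still steered by the $K$-biased permutations. So the step ``apply $\phi$-mixing of $K$ across the gap between $T_i+2m$ and $T_{i+n}$'' is not a legitimate application of the definition, and your claim that after truncating the backbone condition ``the events live on disjoint bounded time slabs'' is false as stated --- the slabs are still random. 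The paper resolves exactly this point by decomposing the past event over the regeneration endpoint, writing $A^N\cap\{(X_N,T_N)=(x,l)\}=A^N_{(x,l)}\cap\{\xi^P_l(x)=1\}$ with $A^N_{(x,l)}$ supported strictly below level $l$, and applying the mixing of $K$ and the cone-mixing of $\xi^P$ separately for each fixed $(x,l)$. This decomposition produces an infinite sum of error terms over endpoints, and it is precisely here that the $\phi$-mixing hypothesis on $K$ (rather than mere $\alpha$-mixing) is indispensable: each error carries the factor $\mathbb P(A^N_{(x,l)})$, see \eqref{eq_error_term_1}, and these factors sum to at most $1/\mathbb P(B_0)$ by \eqref{eq: sum_in_mixing_upper_bound_is_finite}. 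Your proposal never confronts this summation, so it does not actually reach the stated bound $\phi_{2mn}+2\alpha^P_{2mn}$.

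A secondary remark: the truncation of the backbone condition you propose is not needed for this lemma. In the paper's proof the infinite look-ahead is isolated in the single-site events $\{\xi^P_l(x)=1\}$ and $B_0$, which are handled directly by the cone-mixing of Lemma \ref{lem:environment_is_mixing}; this is also where the factor $2$ in front of $\alpha^P_{2mn}$ comes from (the percolation mixing is invoked twice, once to decouple $B^N$ from $\{\xi^P_l(x)=1\}$ and once from $B_0$ inside $\tilde{\mathbb P}(B^N)$), rather than from the conditioning alone as you suggest. A truncation argument could be made to work, but only after the endpoint decomposition above, and it would still leave you with the same summability issue that forces the use of $\phi$-mixing of $K$.
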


\begin{proof}
Fix a site $(x,l)\in V$ such that $||x||_\infty \leq l$. Then $\tilde{\mathbb P}(\gamma_{T_n}(T_n)=(x,l))>0$. We observe that for all $n\in\mathbb N$ by the local construction of the random walk there exists an event 
\begin{align*}
	A'\in\sigma\left (\omega(y,k), \tilde\omega(y,k): (y,k)\in V, 0\leq k < T_n\right )
\end{align*}
such that 
\begin{align}\label{eq:split_regeneration_time}
	\{\gamma_{T_n}(T_n)=(x,l)\}=A'\cap \{(x,l)\rightarrow \infty\}\subset B_0.
\end{align}
Let $\theta_z:\Omega\mapsto \Omega$, $z\in V$ be the standard shift operator such that $(\theta_{z}\omega)(z')=\omega(z+z')$ for any $\omega \in \Omega$, $z,z'\in V$. Then we can write
\begin{align}\label{eq:split_shifted_regeneration_time}
	\theta^{-1}_{(x,l)}(\{\gamma_{T_n}(T_n)=(x,l)\})=\theta^{-1}_{(x,l)}(A')\cap B_0.
\end{align}
Thus, for every event $A\in\sigma(\xi^K_k(y):(y,k)\in V)$ we have
\begin{align*}
\mathbb P(\theta_{\gamma_{T_n}(T_n)}(A)\cap B_0)&= \mathbb E\left [ \mathbb P \left (\left . \theta_{(x,l)}(A)\cap B_0 \right | \gamma_{T_n}(T_n)=(x,l)\right )\right ]\\
&\stackrel{\mathclap{\eqref{eq:split_regeneration_time}}}= \ \ \mathbb E\left [ \mathbb P \left (\left . \theta_{(x,l)}(A)\right | \gamma_{T_n}(T_n)=(x,l)\right )\right ]\\
&= \mathbb E\left [ \mathbb P \left ( A \left  | \theta^{-1}_{(x,l)} (\{\gamma_{T_n}(T_n)=(x,l)\})\right .\right )\right ]\\
&\stackrel{\mathclap{\eqref{eq:split_shifted_regeneration_time}}}= \ \ \mathbb E\left [ \mathbb P \left ( A\cap B_0 \left  | \theta^{-1}_{(x,l)} (\{\gamma_{T_n}(T_n)=(x,l)\})\right .\right )\right ]\\
&=\mathbb P (A\cap B_0).
\end{align*}
Consequently $\tilde{\mathbb P}(\theta_{\gamma_{T_n}(T_n)}(A))=\tilde{\mathbb P} (A)$ and both processes are stationary with respect to $\tilde{\mathbb P}$.

Denote by $\mathcal W$ the $\sigma$-algebra that contains all possible paths of the random walk, namely
\begin{align*}
	\mathcal W_k^l :=\sigma \left (\{(X_i(\omega), i)\}_{i=k}^l: \omega\in\Omega  \right )
\end{align*}
and $\mathcal W=\mathcal W_0^\infty$. Then the mixing coefficients for the process $(X_{T_n}-X_{T_{n-1}})_{n\in\mathbb N}$ are given by
\begin{align}
	\phi^X_n=\sup_{N\in\mathbb N} \sup\limits_{\substack{W\in\mathcal W, \\ A^N:=W\cap \mathcal W_0^{T_N}, \\ B^N:=W\cap \mathcal W_{T_{N+n}}^\infty}}  \left | \frac{\tilde {\mathbb P}(A^N\cap B^N)}{\tilde {\mathbb P}(A^N)}-\tilde {\mathbb P}(B^N)\right |.
\end{align}
Note that by definition of the random walk and since $K>0$ we have $\tilde {\mathbb P}(A^N)>0$ for all $A^N\in W\cap\mathcal W_0^{T_N}$.
We will from now on leave out the subscripts of the suprema. Furthermore, note that for every $(x,l)\in V$ exists an event
\begin{align*}
	A^N_{(x,l)}\in\sigma(\omega(y,k),  \tilde{\omega}(y,k): y\in\mathbb Z^d, k<l)
\end{align*}	
such that
\begin{align*}
	A^N\cap\left \{(X_N, T_N)=(x,l)\right \}=A^N_{(x,l)}\cap\{\xi^P_l(x)=1\}.
\end{align*}
This allows us to split up the events into disjoint subsets depending on where the path ends. We rewrite the mixing coefficients as
\begin{align*}
	\phi^X_n&=\sup \sup \frac{1}{{\tilde {\mathbb P}(A^N)}}\left | \sum_{(x,l)\in\mathbb Z^{d+1}} \tilde {\mathbb P}\left (A^N\cap B^N\cap \{(X_N, T_N)=(x,l)\}\right ) \right . \\
	&\qquad \qquad \qquad \qquad \qquad \left . \vphantom{ \sum_{(x,l)}} -\tilde {\mathbb P}\left (A^N\cap \{(X_N, T_N)=(x,l)\}\right )\tilde {\mathbb P}\left (B^N\right )\right | \\
	&=\sup \sup \frac{1}{{\tilde {\mathbb P}(A^N)}}\left | \sum_{(x,l)} \tilde {\mathbb P}\left (A^N_{(x,l)}\cap B^N\cap \{\xi^P_l(x)=1\}\right ) \right . \\
	&\qquad \qquad \qquad \qquad \qquad \left . \vphantom{ \sum_{(x,l)}} -\tilde {\mathbb P}\left (A^N_{(x,l)}\cap \{\xi^P_l(x)=1\}\right )\tilde {\mathbb P}\left(B^N\right )\right |\\ 
	&=\sup \sup \frac{1}{\mathbb P(B_0){\tilde {\mathbb P}(A^N)}}\left | \sum_{(x,l)}  {\mathbb P}\left (A^N_{(x,l)}\cap B^N\cap \{\xi^P_l(x)=1\}\right ) \right . \\
	&\qquad \qquad \qquad \qquad \qquad \left . \vphantom{ \sum_{(x,l)}} -{\mathbb P}\left (A^N_{(x,l)}\cap \{\xi^P_l(x)=1\}\right )\tilde {\mathbb P}\left (B^N \right)\right |.
\end{align*}
The last equation follows from the fact that $B_0\subset A^N_{(x,l)}\cap\{\xi^P_l(x)=1\}$. We can use independence of $A^N_{(x,l)}$ and $\{\xi^P_l(x)=1\}$ and the mixing property of the weights $K$ to get
\begin{align}
	\phi^X_n &\leq \sup \sup \frac{1}{\mathbb P(B_0){\tilde {\mathbb P}(A^N)}}\left | \sum_{(x,l)}  \mathbb P\left (A^N_{(x,l)}\right )\mathbb P\left (B^N\cap \{\xi^P_l(x)=1\}\right ) \right . \\ \nonumber
	&\qquad \qquad \qquad \qquad \qquad \left . \vphantom{ \sum_{(x,l)}}-\mathbb P\left (A^N_{(x,l)}\right )\mathbb P\left (\{\xi^P_l(x)=1\}\right )\tilde {\mathbb P}\left (B^N\right )\right | + \mathcal E_1(n),
\end{align}
where 
\begin{align}\label{eq_error_term_1}
	\mathcal E_1(n):=\frac{1}{\mathbb P(B_0)}\frac{\sum_{(x,l)} \mathbb P\left (A^N_{(x,l)}\right )}{{\tilde {\mathbb P}(A^N)}} \phi_{T_{N+n}-T_N}\leq \frac{1}{\mathbb P(B_0)}\frac{\sum_{(x,l)}\mathbb P\left (A^N_{(x,l)}\right )}{\tilde {\mathbb P}(A^N)} \phi_{2mn}.
\end{align}
We use stationarity of the environment and $\mathbb P(B_0)>0$ to see that in fact
\begin{align}\label{eq: sum_in_mixing_upper_bound_is_finite}
	\tilde {\mathbb P}(A^N)=\sum_{(x,l)}\frac{{\mathbb P}\left (A^N_{(x,l)}\cap\{\xi^P_l(x)=1\}\right)}{\mathbb P(B_0)} = \sum_{(x,l)}{\mathbb P}\left (A^N_{(x,l)}\right ).
\end{align}
Note that since $K$ is $\phi$-mixing instead of $\alpha$-mixing, the factor $\sum_{(x,l)}\mathbb P(A_{(x,l)}^N)$ appears in the upper bound in Equation \eqref{eq_error_term_1}, which cancels with the denominator and makes the error term finite.

We can use the mixing property of $\xi^P$ from Lemma \ref{lem:environment_is_mixing} to factor $\mathbb P(B^N\cap \{\xi^P_l(x)=1\})$ and $\tilde{\mathbb P}(B^N)=\mathbb P(B^N\cap B_0)/\mathbb P(B_0)$. This leads to the upper bound
\begin{align}
	\phi^X_n&\leq \sup \sup \frac{1}{\mathbb P(B_0){\tilde {\mathbb P}(A^N)}}\left | \sum_{(x,l)}  \mathbb P\left (A^N_{(x,l)}\right )\mathbb P\left (B^N\right )\mathbb P\left (\xi^P_l(x)=1\right ) \right . \\ \nonumber
	&\qquad \qquad \qquad \qquad  \left . \vphantom{ \sum_{(x,l)}}-\mathbb P\left (A^N_{(x,l)}\right )\mathbb P\left (\xi^P_l(x)=1\right )\mathbb P\left (B^N\right )\right | + \mathcal E_1(n)+2\mathcal E_2(n) \\
	&= \sup \sup \left (\mathcal E_1(n)+2\mathcal E_2(n)\right ), 
\end{align}
with 
\begin{align}\label{eq_error_term_2}
	\mathcal E_2(n):=\frac{1}{\mathbb P(B_0)}\frac{\sum_{(x,l)} \mathbb P\left (A^N_{(x,l)}\right )}{\tilde {\mathbb P}(A^N)} \alpha^P_{T_{N+n}-T_N}\leq \frac{1}{\mathbb P(B_0)}\frac{\sum_{(x,l)}\mathbb P\left (A^N_{(x,l)}\right )}{\tilde {\mathbb P}(A^N)} \alpha^P_{2mn}.
\end{align}
Combining Equations \eqref{eq_error_term_1},  \eqref{eq: sum_in_mixing_upper_bound_is_finite} and \eqref{eq_error_term_2} tells us that overall the sequence of regeneration increments is $\phi$-mixing and the mixing coefficients are bounded above by 
\begin{align}
	\phi^X_n\leq \frac{1}{\mathbb P(B_0)}\left ( \phi_{2mn} + 2\alpha^P_{2mn}\right ).
\end{align}
\end{proof}

With this preparation the LLN, Lemma \ref{lem:lln}, follows directly from the previous results.

\begin{proof}[Proof of Lemma \ref{lem:lln}]
By Lemma \ref{lem:increments_are_ergodic}, the sequence $(Y_n)_{n\in\mathbb N}$ is stationary and mixing and therefore ergodic. The law of large numbers follows from the ergodic theorem (\citealp{Birkhoff_1931}) together with standard arguments from renewal theory and the drift vector takes the usual form,
 \begin{align}\label{eq: drift_vector}
 	\vec \mu =\frac{\tilde{\mathbb E}[X_{T_1}]}{\tilde{\mathbb E}[T_1]}.
 \end{align}
\end{proof}

The next example shows that the average $\vec\mu$ can indeed be non-zero on the full lattice, even if the weights $K$ are independent in time. The example was provided in private communication by Noam Berger.

\begin{example}
Let $d=1$. We construct an environment from bounded weights that are independent in time such that the random walk is ballistic in the space coordinate, i.e.~$\mu \neq 0$. Let $(\beta (n))_{n\in\mathbb N}$ be a family of independent random variables, each of them uniformly distributed on the set $\{0,1,2\}$. Choose weights for all $x\in\mathbb Z$ according to
\begin{align*}
	K(x,n)=((\beta(n)+3|x|+x)\mod 3)+1\in\{1,2,3\}.
\end{align*}
Then the average speed is
\begin{align*}
	\mu=\mathbb E\left [\frac{K(1,n)-K(-1,n)}{K(1,n)+K(-1,n)}\right ]=-1/90<0
\end{align*}
for any $n\in\mathbb N$.
\end{example}

\section{The Annealed Central Limit Theorem}

The aCLT follows without much additional work from the results we already established for the LLN.

\begin{proof}[Proof of Theorem \ref{thm:aCLT}]
We begin with the proof for $p<1$, where we have to consider the percolation cluster. As defined in Equation \eqref{eq: length_of_longest_path} the random variable $\tau^{(x,n)}$ denotes the length of the longest open path starting at the site $(x,n)$. The proof is similar to the proof of Lemma 2.5 in \cite{Birkner_et_al_2012}, since $K$ is independent of $\omega$ and the bounds derive from the structure of the open cluster. In particular, the increments $(\sigma_{k+1}-\sigma_k)\leq l(\gamma_{\sigma_k}(\sigma_k))$ are dominated by a random variable, which is independent of the weights $K$. Furthermore the number of trials to find a regeneration time is dominated by a geometric random variable with success probability $\mathbb P(B_0)(1-p)^{2m(2d-1)}>0$ by Equation \eqref{eq: lower_bound_on_regeneration_path}. Consequently, the first regeneration time has exponential tails,
\begin{align}\label{eq:increment_tail_bound}
	\tilde{\mathbb P}\left (T_1>n  \right )\leq Ce^{-cn}.
\end{align}
The same bound holds for the space increment $||Y_1||_\infty$, since $||Y_1||_\infty \leq T_1$ for all $n\in\mathbb N$. We have shown in Lemma \ref{lem:increments_are_ergodic} that the sequence of regeneration increments $(Y_n, \tau_n)_{n\in\mathbb N}$ is stationary and $\phi$-mixing with coefficients $\phi^X_{n}$ for $n$ large enough. Therefore, all increments have exponential tail bounds. Under the mixing conditions of Theorem  \ref{thm:aCLT}, $\phi_n \in\mathcal O(n^{-(2+\delta)})$ for some $\delta>0$, we get that
\begin{align}\label{eq:mixing_is_quick_enough}
	\sum_{k=1}^\infty (\phi_{k}^X)^{\frac{1}{2}}\leq\frac{1}{\mathbb P(B_0)^{1/2}}\sum_{k=1}^\infty (\phi_{2mk}+2\alpha_{2mk}^P)^{\frac{1}{2}}<\infty.
\end{align}
This is the condition of \cite{Ibragimov_Linnik_1971} for the CLT for $\phi$-mixing sequences, Theorem 18.5.2. We prove the aCLT first in the case $d=1$. Define centred random variables $Z_n=Y_n-\tilde{\mathbb E}[Y_n]= Y_n-\tilde{\mathbb E}[Y_1]$ for all $n\in\mathbb N$. 
By Equation \eqref{eq:increment_tail_bound} we know that  $\tilde{\mathbb E}\left [|Z_{n}|^{D+2}\right ]<\infty$ and $\tilde{\mathbb E}[\tau_n^{D+2}]< \infty$. Since $Z_n$ is centred, Theorem 17.2.3 in \cite{Ibragimov_Linnik_1971} and Equation \eqref{eq:mixing_is_quick_enough} imply that 
\begin{align}\label{eq:upper_bound_in_m}
	\left | \sum_{n=1}^\infty \tilde{\mathbb E}[Z_0Z_n]\right|&\leq 2 \sum_{n=1}^\infty \tilde {\mathbb E}[Z_0^2]^{1/2}\tilde {\mathbb E}[Z_n^2]^{1/2}(\phi_{n}^X)^{1/2}\nonumber\\
	&=2\tilde{\mathbb E}[Z_0^2]\sum_{n=1}^\infty (\phi_{n}^X)^{1/2}<\infty.
\end{align}
Furthermore, the sum can be made arbitrary small if we choose $m$ large enough. Choose $m$ so that $|\sum_{n=1}^\infty (\phi_{n}^X)^{1/2}|<1/4$.
Then the variance is strictly positive,
\begin{align}\label{eq:m_arbitrary_makes_cc_vanish}
	\sigma^2=\tilde {\mathbb E}[Z_0^2]+2\sum_{n=1}^\infty \tilde{\mathbb E}[Z_0 Z_n]\geq \tilde {\mathbb E}[Z_0^2]\left (1-4\sum_{n=1}^\infty (\phi_{n}^X)^{1/2}\right )>0.
\end{align}
%\begin{align}\label{eq:upper_bound_in_m}
%\left | \sum_{n=1}^\infty  \tilde{\mathbb E}\left [ Z_{0}Z_{n}\right ] \right |&\leq 8\tilde{\mathbb E}\left [|Z_{0}|^{D+2}\right ]\tilde{\mathbb E}\left [|Z_{n}|^{2+D}\right ]\sum_{n=1}^\infty (\alpha_{n}^X)^{\frac{D}{D+2}}\nonumber\\
%&\leq C \cdot (\alpha_1^X)^{\frac{D}{D+2}} \xrightarrow{m\rightarrow \infty}0.
%\end{align}
%The space increment $Z_{0}$ is not constant, so $\tilde{\mathbb E}\left [Z_{0}^{2}\right ]>0$. Therefore, by Equation \eqref{eq:upper_bound_in_m} we are free to chose $m$ so large that
%\begin{align}\label{eq:m_arbitrary_makes_cc_vanish}
%	\left | \sum_{k=1}^\infty \tilde{\mathbb E}[Z_{0}Z_{k}] \right | <  \frac 12 \tilde{\mathbb E}\left [Z_{0}^{2}\right ].
%\end{align}
%This choice of the distance $2m$ between two pieces of the regeneration increments allows us to conclude that the variance is strictly positive and the central limit theorem has a non-degenerate limit. Namely, the previous equation implies
%\begin{align}\label{eq:variance_of_CLT}
%	\sigma^2:=\tilde{\mathbb E}\left [Z_{0}^{2}\right ]+2 \sum_{k=1}^\infty \tilde{\mathbb E}[Z_{0}Z_{k}] >0.
%\end{align}
This choice of the distance $2m$ between two pieces of the regeneration increments allows us to conclude that the variance is strictly positive and the central limit theorem has a non-degenerate limit. Here, we use the percolation cluster explicitly to bound the variance away from zero. Using a central limit theorem for stationary and $\phi$-mixing sequences, e.g. Theorem 18.5.2 in \cite{Ibragimov_Linnik_1971}, and renewal arguments (\citealp{Kuczek_1989}) we get a non-degenerate central limit theorem for the sequence $(Y_n, \tau_n)_{n\in\mathbb N}$. 

Furthermore, we can generalize this result to the multivariate case using e.g.~Lévy’s continuity theorem as in \cite{Rio_2013}, Corollary 4.1. In this case, we have to choose $m$ large enough, such that the covariance matrix $\Sigma$ has full rank. The covariance matrix $\Sigma:=(\Sigma_{ij})_{1\leq i,j\leq d}$ is given by
\begin{align}\label{eq:covariance_matrix_of_CLT}
	\Sigma_{ij}=\tilde{\mathbb E}\left [\langle Z_{0},e_i \rangle \right ]+2 \sum_{k=1}^\infty \tilde{\mathbb E}[\langle Z_{0}, e_i\rangle \langle Z_{k}, e_j \rangle],
\end{align}
where $\langle \cdot, \cdot \rangle$ denotes the usual Euclidean scalar product and $\{e_1, \ldots, e_d\}$ is the canonical basis of $\mathbb Z^d$.
\end{proof}

\section{The Quenched Central Limit Theorem}
The main idea for the proof is to study a pair of random walks on the same environment and show that their behaviour is close enough to the behaviour of two random walks on independent copies of the environment. As we did for the regeneration structure for a single random walk we define the sequence of regeneration times for two random walks starting at times $T_0=T'_0=0$ for $ j\geq 1$ by
\begin{align}
	\begin{split}
		&T_j:=\inf\left \{k>T_{j-1}+2m:\gamma^{(x,0)}_{k-2m}(k-2m)\in\mathcal C\cap S_{2m}\right\}, \\
		&T_j':=\inf\left \{k>T'_{j-1}+2m:\gamma'^{(x',0)}_{k-2m}(k-2m)\in\mathcal C\cap S_{2m}\right\}.
	\end{split}
\end{align}
Set $J_0=J'_0=0$ and for $m\in\mathbb N$ and define auxiliary times
\begin{align}
	\begin{split}
		J_j&:=\inf\{k>T_{j-1}:T_k=T'_{k'} \text{ for some } k'>J'_j\} \quad \text{and}\\
		J_j'&:=\inf\{k>T'_{j-1}:T'_{k'}=T_k \text{ for some } k>J_j\}.
	\end{split}
\end{align}
Define the sequence of simultaneous regeneration times by 
\begin{align}
	T_m^{\text{sim}}:=T_{J_m}=T'_{J'_m},\quad m\geq 0
\end{align}
or recursively $T_0^{\text{sim}}=0$ and
\begin{align}
	T_m^{\text{sim}}=\min\left( \{T_j:T_j>T_{m-1}^{\text{sim}}\}\cap\{T'_j:T'_j>T_{m-1}^{\text{sim}}\}\right ).
\end{align}
The increments $Y_k,Y'_k, \tau_k$ and $\tau'_k$ are defined as in the single walk case and we set for $m,l\in \mathbb N$
\begin{align}
	\begin{split}
		\tilde X_m:=X_{T_m}, &\quad \tilde X'_m:=X'_{T'_m} \\
		\hat X_l:=X_{T_l^{\text{sim}}}, &\quad\hat X'_l:=X'_{T_l^{\text{sim}}}.
	\end{split}
\end{align}
Finally denote the pieces between simultaneous regenerations $\Xi_m\in \mathcal W := \mathbb F\times\mathbb  F\times\mathbb Z^d \times\mathbb Z^d$ by
\begin{align}
	\Xi_m:=\left ( (Y_k, \tau_k)_{k=J_{m-1}+1}^{J_m}, (Y'_k, \tau'_k)_{k=J'_{m-1}+1}^{J'_m}, X_{T_{J_m}},X'_{T'_{J'_m}} \right ),
\end{align}
where $\mathbb F:=\bigcup_{n=1}^\infty (\mathbb Z^d \times \mathbb N)^n$.
We need some more notation to indicate when we are considering two random walks simultaneously on the same percolation cluster. Take two starting points for the random walks $x,x'\in\mathbb Z^d$. Let 
\begin{align*}
	B_{x,x'}:=\{\xi^P_0(x)=1\}\cap \{ \xi^P_0(x')=1\}
\end{align*} 
be the event that both starting points are in the backbone $\mathcal C$. Conditioned on $B_{x,x'}$ let $X:=(X_n)_n$ and $X':=(X'_n)_n$ be two independent random walks started at $(x,0)$ and $(x',0)$ respectively and both with transition probabilities  as in Equation \eqref{eq:trans_probabilities}. Write for the law of the two walkers conditioned on $B_{x,x'}$
\begin{align}
	\tilde{\mathbb P}^{\mathrm{joint}}_{x,x'} (\cdot)=\mathbb P^{\mathrm{joint}}_{x,x'} \left (\left . \cdot \ \right  | B_{x,x'}\right )=\mathbb P^{\mathrm{joint}}\left ( \left . \cdot \ \right | X_0=x, X'_0=x', B_{x,x'}\right ),
\end{align}
where the superscript indicates that the two walks run on the same realization of the environment. We will describe the joint law by comparing it to the law of two independent random walks $\tilde{\mathbb P}^{\mathrm{ind}}_{x,x'}$, which is the product measure of two independent copies of single random walks with laws $\tilde{\mathbb P}_{x}^1$ and $\tilde{\mathbb P}_{x'}^2$ on two independent copies $\Omega_1, \Omega_2$ of the environment,
\begin{align*}
	\tilde{\mathbb P}^{\mathrm{ind}}_{x,x'} (\cdot)=\tilde{\mathbb P}_{x}^1(\cdot) \tilde{\mathbb P}^2_{x'}(\cdot).
\end{align*}
To describe the second random walk, let $\tilde\omega'$ be another, independent random permutation distributed like $\tilde\omega$ and define paths $\gamma_k'^{(x,n)}$ analogously to $\gamma_k^{(x,n)}$ using $\tilde\omega'$ instead of $\tilde\omega$. For given $n,k$ the construction of both paths are measurable w.r.t 
\begin{align}
	\hat{\mathcal G}_n^{k}:=\sigma\left ( \omega(y,i), \tilde\omega(y,i), \tilde\omega'(y,i)\ : \ y\in\mathbb Z^d, n\leq i<k\right ).
\end{align}
Conditioned on $B_{x,x'}$ we may couple the random walks by
\[(X_k,k)=\lim_{n\rightarrow\infty}\gamma_n^{(x,0)}(k),\quad (X'_k,k)=\lim_{n\rightarrow\infty}\gamma_n'^{(x',0)}(k).\]

The proof of Theorem \ref{thm:qCLT} is analogous to the proof of Theorem 2 in the paper of \cite{Birkner_et_al_2012}. However, some of the lemmas along the way have to be modified. Most of the proofs in this paper are kept rather short, if a similar and more detailed version can be found in the original paper. Here, we will list the essential adaptations needed to make it suitable for our problem. We get the exponential bounds on the joint regeneration times with the same argument as in the proof of Theorem \ref{thm:aCLT}, Equation \eqref{eq:increment_tail_bound}, i.e.~for all $x,x'\in\mathbb Z^d$
\begin{align}\label{eq:simultaneous_increment_tail_bound}
	\tilde{\mathbb P}_{x,x'}^{\mathrm{joint}}\left ( T^{\mathrm{sim}}_1>n \right )\leq Ce^{-cn}.
\end{align}
Also, the sequence of joint regeneration increments is again stationary and $\alpha$-mixing by a similar reasoning as is used in Lemma \ref{lem:increments_are_ergodic}.

\begin{lemma}[Total variation distance of joint and independent law, cf.~Lemma 3.4 in \cite{Birkner_et_al_2012}]\label{lem:tv_distance}
	There exist constants $0<c,C<\infty$ such that for all $x,x'\in\mathbb Z^d$
\begin{align*}
	\left \|{\mathbb P}^{\mathrm{joint}}_{x,x'} (\Xi_1=\cdot)-{\mathbb P}^{\mathrm{ind}}_{x,x'} (\Xi_1=\cdot)\right \|_{\text{TV}}\leq Ce^{-c||x-x'||},
\end{align*}
where $||\cdot||_{TV}$ is the total variation norm.
\end{lemma}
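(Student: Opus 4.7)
The plan is to decouple the two walkers by exploiting the spatial separation $r := \|x - x'\|_\infty$ of their starting points: use the exponential tail of $T_1^{\mathrm{sim}}$ to localize the regeneration pieces into two disjoint space-time cones, then use independence of $\omega$ on disjoint regions together with exponential space-$\alpha$-mixing of $K$ to factorize the relevant probabilities.

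First, pick a large constant $C_0$ (to be fixed) and set $n := \lfloor r/C_0 \rfloor$, and define the good event $G := \{T_1^{\mathrm{sim}} \leq n\}$. By \eqref{eq:simultaneous_increment_tail_bound} together with the same structural argument applied to $\mathbb{P}^{\mathrm{ind}}_{x,x'}$ (where each walker independently satisfies \eqref{eq:increment_tail_bound}), both laws assign probability at most $Ce^{-cn}$ to $G^c$. Hence modulo an $O(e^{-cn})$ error it suffices to bound
\begin{align*}
\sup_A \left| \mathbb{P}^{\mathrm{joint}}_{x,x'}(\{\Xi_1 \in A\} \cap G) - \mathbb{P}^{\mathrm{ind}}_{x,x'}(\{\Xi_1 \in A\} \cap G) \right|.
\end{align*}
On $G$ each walker stays within $\ell_\infty$-distance $n$ of its start, so the local constructions of $\gamma^{(x,0)}$ and $\gamma'^{(x',0)}$ up to time $T_1^{\mathrm{sim}}$ only reference $\omega, K, \tilde\omega, \tilde\omega'$ on the cones $V_x := \{(y,j): 0 \leq j \leq n, \|y-x\|_\infty \leq n\}$ and $V_{x'}$ (defined analogously), together with $\mathcal{O}(n^{d+1})$ backbone events of the form $\{(z,l) \to \infty\}$ at points $(z,l) \in V_x \cup V_{x'}$ (entering through $\mathcal{C}$ and $S_{2m}$ in the regeneration definition).

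Next, replace each backbone event $\{(z,l) \to \infty\}$ by the finite-horizon surrogate $\{l(z,l) \geq n\}$; by \eqref{eq: finite_open_path_length} each substitution costs at most $Ce^{-cn}$, and the total replacement cost is $\leq Cn^{d+1}e^{-cn} \leq Ce^{-cr/(2C_0)}$ provided $C_0$ is large. After the substitution $\{\Xi_1 = \xi\} \cap G$ becomes measurable with respect to $\omega, K, \tilde\omega, \tilde\omega'$ on enlarged cones $\widetilde V_x, \widetilde V_{x'}$ of space-time radius $2n$, and for $C_0 \geq 8$ we get $\mathrm{dist}^{\mathrm{s}}(\widetilde V_x, \widetilde V_{x'}) \geq r/2$. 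Write the resulting event as $F_x \cap F_{x'}$ with $F_x \in \sigma(\omega|_{\widetilde V_x}, K|_{\widetilde V_x}, \tilde\omega|_{\widetilde V_x})$ and $F_{x'}$ analogously. Since $\omega$ is i.i.d.\ Bernoulli and $\tilde\omega, \tilde\omega'$ are independent of $\omega, K$ and of each other, conditioning on $K$ yields
\begin{align*}
\mathbb{E}[\mathds{1}_{F_x} \mathds{1}_{F_{x'}} \mid K] = f(K|_{\widetilde V_x})\, g(K|_{\widetilde V_{x'}})
\end{align*}
for bounded measurable $f, g$, and the exponential space-$\alpha$-mixing of $K$ gives $|\mathbb{P}(F_x \cap F_{x'}) - \mathbb{P}(F_x)\mathbb{P}(F_{x'})| \leq 4\alpha^K_{r/2} \leq Ce^{-c_2 r/2}$. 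The identical argument applied to $B_x, B_{x'}$ (after approximating each by a long-path event via \eqref{eq: finite_open_path_length}) yields $|\mathbb{P}(B_{x,x'}) - \mathbb{P}(B_x)\mathbb{P}(B_{x'})| \leq Ce^{-cr}$. Using $F_x \subset B_x$, $F_{x'} \subset B_{x'}$, stationarity $\mathbb{P}(B_x) = \mathbb{P}(B_{x'}) = \mathbb{P}(B_0)>0$, together with the identities $\mathbb{P}^{\mathrm{joint}}_{x,x'}(\{\Xi_1 \in A\} \cap G) = \mathbb{P}(F_x \cap F_{x'})/\mathbb{P}(B_{x,x'})$ and $\mathbb{P}^{\mathrm{ind}}_{x,x'}(\{\Xi_1 \in A\} \cap G) = \mathbb{P}(F_x)\mathbb{P}(F_{x'})/(\mathbb{P}(B_x)\mathbb{P}(B_{x'}))$, a short elementary manipulation of the ratio bounds the difference by $Ce^{-cr}$, uniformly in $A$.

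The main obstacle will be the careful bookkeeping in the approximation step: each of the $\mathcal{O}(n^{d+1})$ backbone conditions contributes an $e^{-cn}$ error, and the polynomial prefactor must be dominated by the exponential. This forces $n$ to be a constant fraction of $r$ but not too close to $r$ (so that the enlarged cones $\widetilde V_x, \widetilde V_{x'}$ remain $\ell_\infty$-separated by $\Omega(r)$), which is precisely why $C_0$ must be chosen sufficiently large---balanced against the mixing rates $c_1, c_2$ of $K$. Handling the normalization by $\mathbb{P}(B_{x,x'})$ (comparing it to $\mathbb{P}(B_x)\mathbb{P}(B_{x'})$) requires repeating the same decoupling argument for infinite-path events, but this reduces to the identical machinery and therefore inherits the same exponential rate.
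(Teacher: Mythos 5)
Your overall strategy is the same decoupling argument as the paper's: use the exponential tail of $T_1^{\mathrm{sim}}$ to localize both regeneration pieces into cones around $x$ and $x'$, replace the non-local backbone conditions by finite-horizon path events at an exponentially small cost (this is exactly the device behind Lemma \ref{lem:environment_is_mixing} and Equation \eqref{eq: finite_open_path_length}), and then decouple the two cones using the i.i.d.\ structure of $\omega$ and the spatial mixing of $K$. The paper organizes this through the events $L_1,L_2$, a sum over the possible endpoint $(y,k)\in S_1$ of the first walker's increment, and the already-proved mixing of $\xi^P$, but the mechanism is the same, and your treatment of the normalizations $\mathbb P(B_{x,x'})$ versus $\mathbb P(B_x)\mathbb P(B_{x'})$ is fine.

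There is, however, a genuine gap in your final factorization step. You bound $\sup_A\left|\mathbb P^{\mathrm{joint}}_{x,x'}(\{\Xi_1\in A\}\cap G)-\mathbb P^{\mathrm{ind}}_{x,x'}(\{\Xi_1\in A\}\cap G)\right|$ and then "write the resulting event as $F_x\cap F_{x'}$" with $F_x$ measurable on $\widetilde V_x$ and $F_{x'}$ on $\widetilde V_{x'}$. This representation is only valid for a single atom $\{\Xi_1=w\}$ (or for product events); a general event $A$ of the \emph{pair} of pieces (e.g.\ $\{X_{T^{\mathrm{sim}}_1}=X'_{T^{\mathrm{sim}}_1}\}$) is a union over atoms of such intersections and is not itself an intersection of a cone-$x$ event with a cone-$x'$ event. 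The $\alpha$-mixing covariance inequality for $K$ only controls $\left|\mathbb P(E_1\cap E_2)-\mathbb P(E_1)\mathbb P(E_2)\right|$ for $E_1,E_2$ in the two separate $\sigma$-fields; a bound on the total variation distance between a joint law and the corresponding product law, uniform over all events, is a $\beta$-mixing (absolute regularity) statement, which is strictly stronger than the assumed $\alpha$-mixing in space and does not follow from it. To repair this you can either (i) prove your estimate per atom $w$ and then note that on $G=\{T_1^{\mathrm{sim}}\le n\}$, $n=\lfloor r/C_0\rfloor$, the number of atoms compatible with $G$ is at most $C^{\,n}$ (bounded increments, bounded horizon), so summing the per-atom error $Ce^{-c_2 r/2}+Ce^{-cn}$ over atoms still decays exponentially once $C_0$ is chosen large compared with $\log C/c_2$; or (ii) content yourself with the pointwise version $\sup_w\left|\mathbb P^{\mathrm{joint}}_{x,x'}(\Xi_1=w)-\mathbb P^{\mathrm{ind}}_{x,x'}(\Xi_1=w)\right|$, which is what the paper's proof actually establishes and uses. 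A minor further remark: only the backbone conditions at the starting points and at the simultaneous regeneration endpoints are genuinely non-local on $G$ (intermediate failures are discovered locally), so your $\mathcal O(n^{d+1})$ count of surrogate replacements is an overcount, though a harmless one.
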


\begin{proof}
As in the original paper, without loss of generality, we prove the lemma for two start points $x=0$ and $x'e_1$, where $e_1$ is the first coordinate vector in $\mathbb Z^d$ and $x'>2m$. Let $\Omega_1$, $\Omega_2$ and $\Omega_3$ be three independent copies of environment and permutations $\Omega_i$, i.e. for all $i\in\{1,2,3\}$ we define
\begin{align*}
	\Omega_i:=\{ \omega_i(v), K_i(v),  \tilde \omega_i(v):v\in V\}.
\end{align*}

Throughout this proof, we will add $\Omega_i$ as an argument to our random variables to indicate which realization of percolation and permutation is used in the construction. Detailed definitions can be found in the proof of Lemma 3.4 in \cite{Birkner_et_al_2012}. For example, we write
\begin{align*}
	B_{x,x'}(\Omega_i, \Omega_j):=\{\xi^P_0(x; \Omega_i)=\xi^P_0(x'; \Omega_j)=1\}
\end{align*}
for the condition to start the walks on the backbones of $\Omega_i$ and $\Omega_j$ respectively, 
\begin{align*}
	T_{i,j}^\mathrm{sim}&:=T^\mathrm{sim}(\Omega_i, \Omega_j)\\
	&:=\inf\left \{n\geq 1: \xi^P_n(\gamma_n^{(x,n)}(n;\Omega_i);\Omega_i)=\xi^P_n(\gamma_n^{(x,n)}(n;\Omega_j);\Omega_j) =1 \right \}
\end{align*}
for the simultaneous regeneration times and
\begin{align*}
	&\Xi_1(\Omega_i,\Omega_j)\\
	&:=\left ( (Y_k(\Omega_i), \tau_k(\Omega_i))_{k=1}^{J_1(\Omega_i, \Omega_j)}, (Y'_k(\Omega_j), \tau'_k(\Omega_j))_{k=1}^{J'_1(\Omega_i, \Omega_j)}, X_{T_{i,j}^\mathrm{sim}}(\Omega_i),X'_{T^{\prime\mathrm{sim}}_{i,j}}(\Omega_j) \right ).
\end{align*}
for the simultaneous regeneration increments. To construct a simultaneous regeneration increment of two independent walks $\Xi_{x,x'}^\text{ind}$, we will start one random walk at $x=0$ on $\Omega_1$ and another random walk at $x'$ on $\Omega_2$. Similarly we construct the simultaneous regeneration increment of two walks on the same cluster $\Xi_{x,x'}^\text{joint}$ by starting two random walks in $x$ and $x'$ respectively both on $\Omega_3$. It is convenient to write 
\begin{align*}
	\Xi_{x,x'}^\text{joint}&:=\begin{cases}
		\Xi_1(\Omega_3, \Omega_3), \quad &\text{if }B_{x,x'}(\Omega_3, \Omega_3)\text{ occurs,}\\
		\Delta \quad &\text{otherwise,}
	\end{cases}\\
	\Xi_{x,x'}^\text{ind}&:=\begin{cases}
		\Xi_1(\Omega_1, \Omega_2), \quad &\text{if }B_{x,x'}(\Omega_1, \Omega_2)\text{ occurs,}\\
		\Delta \quad &\text{otherwise,}
	\end{cases}
\end{align*}
with some cemetery state $\Delta$. If we start the random walks far enough apart, then with high probability the regeneration event will happen in two disjoint subsets of $V$ that have a distance of $x'/2$. This allows us to use the mixing properties of the environment, Lemma \ref{lem:environment_is_mixing}. Define the two disjoint subsets of $S_1,S_2\subseteq V$ by
\begin{align*}
	S_1&:=\{(y,k)\in V: |y|\leq k \text{ for all } 0\leq k \leq |x-x'|/4\},\\
	S_2&:=\{(y,k)\in V: |y-x'|\leq k \text{ for all } 0\leq k \leq |x-x'|/4\}.
\end{align*}
Then $\mathrm{dist}(S_1, S_2)=x'/2$ as shown in Figure \ref{fig_total_variation_distance}. Finally, define the events
\begin{align*}
	L_1&:=\{l(x,0;\Omega_1)\vee l(x',0;\Omega_2)\vee l(x,0;\Omega_3)\vee l(x',0;\Omega_3)\leq |x-x'|/4\}, \\
	L_2&:=\{\xi^P_0(x;\Omega_1)=\xi^P_0(x';\Omega_2)=\xi^P_0(x;\Omega_3)=\xi^P_0(x';\Omega_3)=1\}\\
	&\qquad \cap\{T_{1,2}^{\text{sim}}\leq |x-x'|/4\}\cap\{T_{3,3}^{\text{sim}}\leq |x-x'|/4\}.
\end{align*}
Conditioned on these events the two random walks stay far enough apart. Note that the events $L_1$ and $L_2$ are disjoint. Since the probabilities of the complements of both sets have exponential bounds in $x'$ by Lemma A.1 in \cite{Birkner_et_al_2012} and Equation \eqref{eq:increment_tail_bound}, we know that $\mathbb P (L_1^c\cap L_2^c)$ has an exponential tail bound in $x'/4$. So,
\begin{align}\label{eq: split_variation_distance_into_disjoint_subsets}
	&\left |\mathbb P(\Xi_{x,x'}^\mathrm{joint}=w)-\mathbb P(\Xi_{x,x'}^\mathrm{ind}=\omega) \right | \nonumber\\
	&\qquad\leq \left |\mathbb P(\{\Xi_{x,x'}^\mathrm{joint}=w\}\cap L_1)-\mathbb P(\{\Xi_{x,x'}^\mathrm{ind}=\omega\}\cap L_1) \right | \nonumber\\
	&\qquad\qquad+\left |\mathbb P(\{\Xi_{x,x'}^\mathrm{joint}=w\}\cap L_2)-\mathbb P(\{\Xi_{x,x'}^\mathrm{ind}=\omega\}\cap L_2) \right | \nonumber \\
	&\qquad\qquad+Ce^{-cx'/4}.
\end{align}

\begin{figure}[h]
\centering
\includegraphics[width=0.9\textwidth]{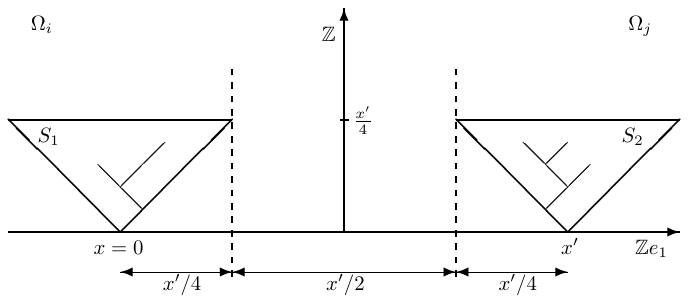}
\caption{Conditioned on either of the events $L_1$ and $L_2$ both regeneration increments depend on percolation cluster and permutations only in the sets $S_1$ and $S_2$ respectively. Since both sets have at least distance $x'/2$, we can use the mixing property of the environment to bound the difference of probabilities of a joint versus an independent pair of regeneration increments.}
\label{fig_total_variation_distance}
\end{figure}

On the event $L_1$ the regeneration increments are supported on the sets $S_1$ and $S_2$. Thus, we only have to use the space-mixing property of $K$ once to obtain immediately
\begin{align}\label{eq: total_variation_distance_L1}
	\left |\mathbb P(\{\Xi_{x,x'}^\text{joint}=w\}\cap L_1)-\mathbb P(\{\Xi_{x,x'}^\text{ind}=w\}\cap L_1) \right | \leq \alpha_{x'/2}.
\end{align}
Very similar to what we did in the proof of Lemma \ref{lem:increments_are_ergodic} we obtain the bounds on our second term by summing over all possible endpoints for the increment. For every site $(y,k)\in S_1$ there exist events 
\begin{align*}
	A_1^{(y,k)} (\Omega_i)&\in \sigma(\omega(z,l);\Omega_i), \tilde \omega(z,l;\Omega_i): (z,l)\in S_1)\quad \text{ and}\\ 
	A_2(\Omega_j) &\in \sigma(\omega(z,l;\Omega_j), \tilde \omega(z,l;\Omega_j) : |z-x'|\leq l),
\end{align*}
such that 
\begin{align*}
	&\{\Xi_{x,x'}^\text{joint}=w\}\cap\left \{\left (X_{T_{3,3}^\mathrm{sim}}(\Omega_3), T_{3,3}^\mathrm{sim}\right )=(y,k)\right \}\cap L_2\\
	&\qquad =A_1^{(y,k)}(\Omega_3)\cap\{\xi^P_k(y;\Omega_3)=1\}\cap A_2(\Omega_3)
\end{align*}
and 
\begin{align*}
	&\{\Xi_{x,x'}^\text{ind}=w\}\cap\left \{\left (X_{T_{1,2}^\mathrm{ind}}(\Omega_1), T_{1,2}^\mathrm{ind}\right )=(y,k)\right \}\cap L_2\\
	&\qquad =A_1^{(y,k)}(\Omega_1)\cap\{\xi^P_k(y;\Omega_1)=1\}\cap A_2(\Omega_2).
\end{align*}
Therefore 
\begin{align}\label{eq: total_variation_distance_L2}
	&\left |\mathbb P(\{\Xi_{x,x'}^\text{joint}=w\}\cap L_2)-\mathbb P(\{\Xi_{x,x'}^\text{ind}=w\}\cap L_2) \right | \nonumber \\
	&\qquad \leq \sum_{(y,k)\in S_1} \left |\mathbb P\left (A_1^{(y,k)}(\Omega_3)\cap\{\xi^P_k(y;\Omega_3)=1\}\cap A_2(\Omega_3)
\right )\right .  \nonumber \\
	&\qquad \qquad \qquad  - \left . \mathbb P\left (A_1^{(y,k)}(\Omega_1)\cap\{\xi^P_k(y;\Omega_1)=1\}\right ) \mathbb P (A_2(\Omega_2)) \right |  \nonumber \\
	&\qquad \leq \sum_{(y,k)\in S_1} \phi_{x'/4} + \alpha^X_{x'/4} \leq |S_1|\left ( \phi_{x'/4} + \alpha^X_{x'/4} \right ). 
\end{align}
Using $|S_1|=x'^2/8$, we can combine the three previous estimates in Equations \eqref{eq: split_variation_distance_into_disjoint_subsets}, \eqref{eq: total_variation_distance_L1} and \eqref{eq: total_variation_distance_L2}  to obtain
\begin{align*}
	& \left \|{\mathbb P}^{\mathrm{joint}}_{x,x'} (\Xi_1=\cdot)-{\mathbb P}^{\mathrm{ind}}_{x,x'} (\Xi_1=\cdot)\right \|_{\text{TV}}\\
	&\qquad =\sup_{w \in\mathcal W \cup \{\Delta\}}\left |\mathbb P(\Xi_{x,x'}^\mathrm{joint}=w)-\mathbb P(\Xi_{x,x'}^\mathrm{ind}=w) \right | \\
	&\qquad \leq Ce^{-cx'/4}+\alpha_{x'/2} +\frac{x'^2}{8}\left ( \phi_{x'/4} + \alpha^X_{x'/4} \right ).
\end{align*}
The conclusion of the lemma follows since all mixing coefficients are exponentially decreasing. 
\end{proof}

We have established that the total variation distance between the laws of two independent walks and two walks on the same cluster becomes small if the walks start far apart. Now, we need estimates on the probabilities to find two independent walks closer together or further apart after some time. For this, we compare it with standard Brownian motion and estimate exit probabilities from an annulus.

\begin{lemma}[Escape time from an annulus, cf.~Lemma 3.6 in \cite{Birkner_et_al_2012}]\label{lem:escape_time}
Let $U$ be the linear, bijective map that decomposes the inverse covariance matrix from Theorem \ref{thm:aCLT}, $\Sigma^{-1}=U^T U$. Write for $r>0$
\begin{align*}
	h(r)&:=\inf\{k\in\mathbb N: ||U(\hat X_k-\hat X'_k)||_\infty\leq r\}\\
	H(r)&:=\inf\{k\in\mathbb N: ||U(\hat X_k-\hat X'_k)||_\infty\geq r\}\\
\end{align*}
and set for $r_1<r<r_2$
\begin{align*}
	f_d(r;r_1,r_2)=\begin{cases}
		\frac{\log(r)-\log(r_1)}{\log(r_2)-\log(r_1)}\quad & \text{for }d\geq 3 \\
		\frac{r_1^{2-d}-r^{2-d}}{r_1^{2-d}-r_2^{2-d}}\quad & \text{for }d=2.
	\end{cases}
\end{align*}
Then for every $\epsilon>0$ there are large constants $R$ and $\tilde R$ such that for all $r_2>r_1>R$ and $r_2-r_1>\tilde R$ and for all starting points $x,y\in\mathbb Z^d$ such that $r=||U(x-y)||_\infty$, $r_1<r<r_2$, 
\begin{align*}
	(1-\epsilon)f_d(r;r_1,r_2)\leq \tilde{\mathbb P}_{x,y}^\mathrm{ind}(H(r_2)<h(r_1))\leq (1+\epsilon)f_d(r;r_1,r_2).
\end{align*}	
\end{lemma}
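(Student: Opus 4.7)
I would reduce the statement to the classical exit probability of an isotropic Brownian motion from an annulus, via the functional invariance principle applied to the difference of the two walks, followed by a change of coordinates and overshoot control.

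First, under $\tilde{\mathbb{P}}^{\mathrm{ind}}_{x,y}$ the walks $X$ and $X'$ live on independent copies of the environment, so the joint increments at simultaneous regeneration times are independent between the two walks and, within each walk, form a stationary, exponentially $\alpha$-mixing sequence with mean $\vec\mu\,\tilde{\mathbb{E}}[\tau_1]$ (this is essentially the pair version of Lemma \ref{lem:increments_are_ergodic}, combined with the tail bound \eqref{eq:simultaneous_increment_tail_bound}). Consequently, the difference process $D_k := \hat X_k-\hat X'_k$ has stationary, exponentially mixing, mean-zero increments with finite covariance equal to $2\Sigma$ rescaled by the average regeneration spacing, and all moments exist by \eqref{eq:simultaneous_increment_tail_bound}. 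By the invariance principle for stationary mixing sequences (e.g.\ Theorem 18.5.3 / 18.5.4 in \cite{Ibragimov_Linnik_1971} together with Donsker-type arguments), the rescaled process $\bigl(D_{\lfloor kt\rfloor}/\sqrt{k}\bigr)_{t\geq 0}$ converges under $\tilde{\mathbb{P}}^{\mathrm{ind}}_{x,y}$ to a Brownian motion with covariance matrix proportional to $2\Sigma$.

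Second, apply the linear bijection $U$ with $U^\top U=\Sigma^{-1}$. Then $UD_k$ is a $d$-dimensional walk whose limiting Brownian motion is \emph{isotropic}, i.e.\ its covariance is a scalar multiple of the identity. For isotropic Brownian motion on $\mathbb{R}^d$ the function $f_d(\cdot;r_1,r_2)$ is precisely the harmonic measure of the outer sphere for the annulus $\{r_1<|z|_\infty<r_2\}$: in $d=2$ the harmonic function on the annulus is $\log|z|$, and in $d\geq 3$ it is $|z|^{2-d}$ (the $\ell^\infty$ ball is used here; the geometry only changes the formulas by constants absorbed in $f_d$ via boundary adjustment, or one works with $\ell^2$ balls throughout). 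Thus the Brownian motion started at a point of $\ell^\infty$-norm $r\in(r_1,r_2)$ exits through the outer boundary with probability exactly $f_d(r;r_1,r_2)$.

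Third, transfer the continuous statement to the walk $UD_k$ using the invariance principle. The stopping times $h(r_1)$ and $H(r_2)$ are first hitting times of sphere-like sets, and the standard argument (see e.g.\ the analogous proof in \cite{Birkner_et_al_2012}) shows that for large $r_1,r_2$ the probability $\tilde{\mathbb{P}}_{x,y}^{\mathrm{ind}}(H(r_2)<h(r_1))$ converges to the Brownian exit probability uniformly over starting radii $r\in(r_1,r_2)$, provided the scale ratios $r_1/r$ and $r/r_2$ stay bounded away from $1$. For scales close to the boundary, one uses the harmonicity and a soft barrier estimate.

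Finally, control the overshoot: because $D_k$ has increments with exponential tails, the overshoot of $\|U D_{h(r_1)}\|_\infty$ below $r_1$, and of $\|U D_{H(r_2)}\|_\infty$ above $r_2$, is $O(1)$ with exponentially thin tails. Choosing $R$ large absorbs these overshoots into the multiplicative error $(1\pm\epsilon)$, while the constraint $r_2-r_1>\tilde R$ ensures the annulus is wide enough that the fluctuation of the walk between consecutive regeneration times cannot distort $f_d$ by more than $\epsilon$. I expect the main obstacle to be the \emph{uniformity in $r$} of the convergence of exit probabilities: one must show that the invariance principle yields the correct asymptotics simultaneously for all starting points in the annulus, which is handled by splitting into the "interior" regime (where Brownian approximation is direct) and the "near-boundary" regime (where one argues by strong Markov at a first time the walk has moved a distance $\asymp (r_2-r_1)^{1/2}$ away from the boundary, combined with the exponential overshoot bound).
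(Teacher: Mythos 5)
Your proposal follows essentially the same route as the paper's proof: a functional CLT for the stationary, exponentially mixing difference walk under the independent law (the paper cites Theorem 2.2 of Merlev\`ede 2003 / Theorem 4.3 of Rio 2013), the linear change of coordinates by $U$ with $\Sigma^{-1}=U^TU$ to make the limiting Brownian motion isotropic, and then comparison with the standard Brownian exit probabilities from an annulus, with discretization and overshoot effects absorbed into the $(1\pm\epsilon)$ factors. Incidentally, your assignment of the harmonic functions ($\log|z|$ for $d=2$, $|z|^{2-d}$ for $d\geq 3$) is the standard one, which indicates that the two cases in the lemma's printed definition of $f_d$ are swapped (as written, the $d=2$ formula degenerates to $0/0$).
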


\begin{proof}
Under the law $\tilde{\mathbb P}^\mathrm{ind}$ the two copies of the random walk $(\hat X_k)_{k\in\mathbb N}$ and $(\hat X'_k)_{k\in\mathbb N}$ are independent and their difference is again a random walk with finite variance and zero mean. By Theorem 2.2 in \cite{Merlevede_2003} or Theorem 4.3 in \cite{Rio_2013} we get a functional central limit theorem for $(\hat X_k-\hat X'_k)_{k\in \mathbb N}$ under the same assumptions as in Theorem \ref{thm:aCLT}. The limit law is Brownian motion with some covariance operator $\Sigma$. Since the covariance matrix $\Sigma$ is symmetric and positive semi-definite it has a Cholesky decomposition. It has full rank and so the inverse has a decomposition $\Sigma^{-1}=U^T U$, where $U$ has full rank as well. Then the limit law of the random walk $(X_n)$ under the map $U$ has identity covariance matrix. If we define $h$ and $H$ as above, we can compare the random walks to the standard estimates of the exit probability from annuli for Brownian motion, which gives the conclusion.
\end{proof}

\begin{lemma}[Separation lemma, cf.~Lemma 3.8 in \cite{Birkner_et_al_2012}]\label{lem:separation_lemma}
For dimension $d\geq 2$ there are constants $b_1,b_2\in(0,1/2)$, $b_3>0$, $b_4\in(0,1)$, $C>0$ such that for large $n$
\begin{align}\label{eq: exit_prob_for_joint}
	\tilde{\mathbb P}_{0,0}^{\mathrm{joint}}\left (H(n^{b_1})\geq n^{b_2}\right )\leq Ce^{-b_3n^{b_4/2}}.
\end{align} 
\end{lemma}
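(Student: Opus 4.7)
The plan is to follow the strategy of Lemma 3.8 in \cite{Birkner_et_al_2012}, combining the total-variation coupling of Lemma \ref{lem:tv_distance} with the annular exit estimate of Lemma \ref{lem:escape_time} to bootstrap the separation of the two walks through dyadic scales. The argument splits into a \emph{launch} stage that drives the walks from their common starting point to a distance of order $\log n$, and a \emph{doubling} stage that enlarges the separation through scales $2^k$ until reaching $n^{b_1}$.

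For the launch stage, the key observation is that whenever $\hat X_k=\hat X'_k$, the construction of the next simultaneous regeneration increment is driven by the two independent permutations $\tilde\omega$ and $\tilde\omega'$, so conditionally on $(\omega,K)$ the children of the coincident site are resampled independently. Combined with $p<1$ and the lower bound from Equation \eqref{eq: lower_bound_on_regeneration_path} on backbone paths in $\mathcal C\cap S_{2m}$, this yields a constant $\eta>0$ depending only on $p,d,m$ such that
\[
	\tilde{\mathbb P}_{0,0}^{\mathrm{joint}}\!\left(\|U(\hat X_{k+1}-\hat X'_{k+1})\|_\infty\geq 1\,\big|\,\hat X_k=\hat X'_k\right)\geq \eta.
\]
Dominating the distance process by a biased walk on $\mathbb N$ and applying a Chernoff bound to $n^{b_2/4}$ independent attempts shows that the walks reach separation $\ell:=C_1\log n$ within the first $n^{b_2/4}$ joint regeneration steps with failure probability at most $\exp(-cn^{b_4/2})$, provided $C_1$ is chosen large enough.

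For the doubling stage, fix a scale $r\in[\ell,n^{b_1}]$ and assume the walks are at $U$-distance $r$. Lemma \ref{lem:tv_distance} bounds the total variation between the joint and independent laws of each simultaneous increment by $Ce^{-cr}\leq Cn^{-cC_1}$, which for $C_1$ large stays summable over the at most $n^{b_2}$ regeneration steps we will use. Under the independent law, Lemma \ref{lem:escape_time} with $r_1=r/2$ and $r_2=2r$ gives a scale-independent lower bound $p_\ast>0$ on the probability of exiting the annulus through the outer boundary; the functional CLT for $(\hat X_k-\hat X'_k)$ used in the proof of Lemma \ref{lem:escape_time}, combined with standard Brownian exit-time estimates, further gives that each attempt terminates within $O(r^2\log r)$ regeneration steps with probability at least $1-Ce^{-cr}$. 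Running $O(n^{b_4/2})$ independent attempts per scale pushes the per-scale failure probability to $\exp(-cn^{b_4/2})$; the total number of regeneration steps used is $O(\log n\cdot n^{2b_1+b_4/2})$, which is less than $n^{b_2}$ once we pick $b_1,b_2\in(0,1/2)$ and $b_4\in(0,1)$ satisfying $b_2>2b_1+b_4/2$. Summing the per-scale failure probabilities over the $O(\log n)$ scales and adding the aggregated total-variation error produces the claimed bound $Ce^{-b_3n^{b_4/2}}$.

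The main obstacle is the launch stage: at microscopic separation, Lemma \ref{lem:tv_distance} is vacuous and the invariance principle for the difference process is unavailable, so one must argue directly using the independence of $\tilde\omega,\tilde\omega'$ at coincident sites together with a uniform lower bound on the number of admissible offspring in $\mathcal C\cap S_{2m}$; the bookkeeping needed to convert per-step splitting probabilities into a stretched-exponential hitting-time estimate is where most of the technical effort lies. The doubling stage is, by contrast, essentially a direct application of Lemmas \ref{lem:tv_distance} and \ref{lem:escape_time}.
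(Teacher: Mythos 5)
Your proposal has the right skeleton (escape estimates plus repeated attempts), but it misses the two points that the paper's proof is actually about, and one of your quantitative choices breaks the claimed bound. First, you repeatedly invoke ``independent attempts'' (Chernoff bound in the launch stage, $O(n^{b_4/2})$ independent attempts per scale in the doubling stage). In this model the regeneration increments are \emph{not} independent: the weights $K$ are only mixing and non-Markovian, so the success probability of an attempt, conditioned on the past, is not uniformly bounded below and products of attempt probabilities cannot simply be multiplied. The paper's proof handles exactly this by (a) forcing the two walks apart through the percolation configuration $\omega$ rather than through the permutations $\tilde\omega,\tilde\omega'$, so that the separation event is measurable with respect to $\xi^P$ alone and one can use its Markov property together with a lower bound of the type \eqref{eq: lower_bound_on_regeneration_path} that is uniform in $K$, and (b) decoupling successive attempts by looking only at every second block and paying an explicit mixing error $\alpha_{n^{3b_1}+n^{b_6}}$ per decoupling, which yields $(1-\delta)^{n^{b_4/2}}+Cn^{b_4}e^{-cn^{3b_1}-cn^{b_6}}$. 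Your launch-stage mechanism (splitting via independent permutations at a coincident site) also needs a uniform $\eta>0$, which is doubtful when the weights can be arbitrarily skewed so that both walks follow the dominant neighbour with probability close to one; the percolation-based forcing is precisely what removes the dependence on $K$ here.

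Second, your choice of launch target $\ell=C_1\log n$ ruins the stretched-exponential bound. In the doubling stage the total-variation comparison of Lemma \ref{lem:tv_distance} at the smallest scale $r\approx C_1\log n$ costs $Ce^{-cr}=Cn^{-cC_1}$ per simultaneous regeneration increment; summed over the increments used at that scale this is only polynomially small in $n$, and a polynomial error cannot be absorbed into $Ce^{-b_3 n^{b_4/2}}$. To get a stretched-exponential bound the comparison/decoupling must take place at scales polynomial in $n$ (as in the paper, where each attempt already targets distance $n^{b_1}$ within $n^{3b_1}+n^{b_6}$ steps, so every error term is of order $e^{-cn^{3b_1}}$ or $(1-\delta)^{n^{b_4/2}}$). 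As written, your argument would at best prove a polynomial upper bound for $\tilde{\mathbb P}_{0,0}^{\mathrm{joint}}(H(n^{b_1})\geq n^{b_2})$, not the inequality \eqref{eq: exit_prob_for_joint}.
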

\begin{proof}
For the proof, we have to be a little bit more careful as our environment $\xi^K$ does not have the Markov property and the regeneration increments are not independent. In the first step of the proof of Equation \eqref{eq: exit_prob_for_joint} we observe that instead of forcing two paths on the same cluster to move in opposite directions by choosing $\tilde \omega$, we can as well choose the percolation $\omega$ to our needs. In this case, we get the required bounds, Equation (3.29) in \cite{Birkner_et_al_2012}, with the further advantage that the construction depends on the percolation only. This allows us to rely on the Markov property of $\xi^P$. Furthermore, define the event
\begin{align*}
	A_n:=\{(\hat X_n, \hat X'_n) \text{ has reached distance } n^{b_1} \text{ in at most } n^{3b_1}+n^{b_6}\text{ steps}\}.
\end{align*}
Following the proof in \cite{Birkner_et_al_2012} we know from their Equations (3.34) and (3.35) that there exist $b_1\in(0,1/6)$ and $b_6\in(0,1/2)$ such that $\tilde{\mathbb P}_{x,y}^{\mathrm{joint}}(A_n)>\delta$ for some $\delta>0$ and uniformly in $x,y$. This bound is based on the escape time estimates of Lemma \ref{lem:escape_time}. We can pick $b_2\in(3b_1\vee b_6, 1/2)$ such that $n^{b_2}\geq n^{3b_1}+n^{b_6}$. We get the required upper bound for the probability to fail  to reach the distance $n^{b_1}$ at least $n^{b_4}$  times by looking only at every second regeneration increment and then using mixing properties to bound dependencies between them. This way we get
\begin{align*}
	\tilde{\mathbb P}_{0,0}^{\mathrm{joint}}\left ( \bigcap_{k=1}^{n^{b_4}}A_n^c\right ) &\leq \tilde{\mathbb P}_{0,0}^{\mathrm{joint}}\left ( \bigcap_{k=1, k \text{ odd}}^{n^{b_4}}A_n^c\right ) \\
	&\leq \tilde{\mathbb P}_{0,0}^{\mathrm{joint}}(A_1^c) \tilde{\mathbb P}_{0,0}^{\mathrm{joint}}\left ( \bigcap_{k=3, k \text{ odd}}^{n^{b_4}}A_n^c\right ) +\alpha_{n^{3b_1}+n^{b_6}}\\
	&\leq \ldots \leq \prod_{k=1, k \text{ odd}}^{n^{b_4}} \tilde{\mathbb P}_{0,0}^{\mathrm{joint}}\left (A_k^c\right )+\frac{n^{b_4}}{2}\alpha_{n^{3b_1}+n^{b_6}}\\
	&\leq(1-\delta)^{n^{b_4/2}}+C n^{b_4} e^{-cn^{3b_1}-cn^{b_6}}\\
	&\leq C n^{b_4}e^{-b_3n^{b_4/2}} \\
	&\leq Ce^{-b_3n^{b_4/2}}
\end{align*}
for $n$ large enough, $b_3:=\min\{-\log(1-\delta), c\}$ and $b_4<\min\{6b_1\vee 2b_6, b_2-3b_1\vee b_6\}$. By construction these attempts take at most 
\begin{align*}
	n^{b_4}(n^{3b_1}+n^{b_6})\leq n^{b_2-3b_1\vee b_6}n^{3b_1\vee b_6}=n^{b_2}
\end{align*}
steps. This proves Equation \eqref{eq: exit_prob_for_joint} for $d\geq 3$ and similarly in $d=2$, see \cite{Birkner_et_al_2012}.
\end{proof}

\begin{lemma}[cf.~Lemma 3.10 in \cite{Birkner_et_al_2012}]\label{lem:Birkner_3_10}
	For $d\geq 2$ there exist constants $b,C>0$ such that for every pair of bounded Lipschitz functions $f,g:\mathbb R^d\rightarrow \mathbb R$ with Lipschitz constants $L_f$ and $L_g$ respectively
\begin{align*}
	&\left | \tilde{\mathbb E}_{0,0}^\mathrm{joint}\left [ f\left ( \frac{\tilde X_n-n\tilde\mu}{\sqrt n}\right )g\left ( \frac{\tilde X'_n-n\tilde\mu}{\sqrt n}\right )\right ]-\tilde{\mathbb E}_{0,0}^\mathrm{ind}\left [ f\left ( \frac{\tilde X_n-n\tilde\mu}{\sqrt n}\right )g\left ( \frac{\tilde X'_n-n\tilde\mu}{\sqrt n}\right )\right ]\right | \\
	&\quad\leq   C\left ( 1+||f||_\infty +L_f\right )\left ( 1+||g||_\infty +L_g\right )n^{-b},
\end{align*}
where $\tilde\mu:=\mathbb E[\tau_1]\vec\mu$ and $\vec \mu$ is as in Theorem \ref{thm:aCLT}.
\end{lemma}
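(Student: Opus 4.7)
The plan is to combine the separation lemma (Lemma \ref{lem:separation_lemma}) with the total variation bound (Lemma \ref{lem:tv_distance}) and the exponential tails of the regeneration increments, following the outline of Lemma 3.10 in \cite{Birkner_et_al_2012} but with extra care for the non-Markovian structure of our environment.

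First I would run the two walks up to an intermediate simultaneous regeneration index $N := \lfloor n^{b_5}\rfloor$ with $b_5 \in (b_2, 1/2)$. Lemma \ref{lem:separation_lemma} guarantees that $\hat X_N$ and $\hat X'_N$ are separated by at least $n^{b_1}$ in the $U$-transformed coordinate with probability $1 - Ce^{-b_3 n^{b_4/2}}$; the complementary event contributes at most $4 \|f\|_\infty \|g\|_\infty C e^{-b_3 n^{b_4/2}}$ to the quantity to be bounded. The exponential tails from Equation \eqref{eq:increment_tail_bound} imply that the $N$ initial regeneration increments have sum of order $n^{b_5}$ with overwhelming probability, so using the Lipschitz property of $f$ and $g$, replacing $\tilde X_n$ and $\tilde X'_n$ by their restarts at simultaneous regeneration index $N$ costs at most $C(L_f \|g\|_\infty + L_g \|f\|_\infty + L_f L_g)\, n^{b_5 - 1/2}$.

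Conditional on separation and on the values $(\hat X_N, \hat X'_N)$, I plan to iterate Lemma \ref{lem:tv_distance} over the remaining $O(n)$ simultaneous regenerations needed to cover real time $n$. Because our environment is not Markov, each iteration requires invoking Lemma \ref{lem:environment_is_mixing} to control the dependence of the future of the environment on its past, exactly as in the proof of Lemma \ref{lem:increments_are_ergodic}. The exponential mixing hypothesis of Theorem \ref{thm:qCLT} ensures that the per-step total variation error is $C e^{-c n^{b_1}}$, and a union bound over $O(n)$ simultaneous regenerations yields an overall total variation error of $O(n e^{-c n^{b_1}})$, which is super-polynomially small. Summing the three contributions gives
$$C(1+\|f\|_\infty + L_f)(1+\|g\|_\infty + L_g)\bigl(n^{b_5 - 1/2} + n e^{-c n^{b_1}} + e^{-b_3 n^{b_4/2}}\bigr),$$
and setting $b := 1/2 - b_5 > 0$ yields the claim.

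The main obstacle is the middle step: unlike in \cite{Birkner_et_al_2012}, we cannot use a strong Markov property at the simultaneous regeneration time to restart, so every iteration of the total variation comparison must pay a mixing price, and one has to check that accumulating these prices across $O(n)$ regenerations still leaves a super-polynomially small error. This is precisely why the quenched CLT here demands exponential (rather than merely polynomial) mixing of $K$. A secondary bookkeeping issue is converting between the single-walk regeneration indices (through which $\tilde X_n$ and $\tilde X'_n$ are defined) and the simultaneous regeneration indices (in which the coupling is most naturally stated); as in \cite{Birkner_et_al_2012}, this should be handled by expressing $\tilde X_n$ in terms of $\hat X$ at an appropriate random simultaneous index via a renewal-type argument, using that the number of single-walk regenerations between two simultaneous ones has exponential tails by Equation \eqref{eq:simultaneous_increment_tail_bound}.
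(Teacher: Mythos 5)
There is a genuine gap in the middle step of your argument. After the separation at simultaneous regeneration index $N$, you iterate the total variation bound of Lemma \ref{lem:tv_distance} with a per-step error $Ce^{-cn^{b_1}}$, but this presumes the two walks stay at separation of order $n^{b_1}$ at all of the remaining $O(n)$ simultaneous regenerations. They do not: under $\tilde{\mathbb P}^{\mathrm{ind}}$ the difference $\hat X_k-\hat X'_k$ is a centred, diffusive walk, and by the annulus estimates of Lemma \ref{lem:escape_time} the probability that it returns from distance $n^{b_1}$ to a bounded distance within the relevant time horizon is only \emph{polynomially} small (in $d=2$ the return probabilities are even only logarithmically suppressed, forcing repeated re-separation arguments). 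When the walks do come close again, the joint and independent laws differ by an amount that is not small, so these re-approach events cannot be absorbed into a super-polynomial error. This is exactly why the conclusion of Lemma \ref{lem:Birkner_3_10} is a polynomial rate $n^{-b}$ and not the stretched-exponential-plus-$n^{b_5-1/2}$ bound your accounting would give; the fact that your error budget is strictly stronger than the statement is the symptom of the missing ingredient. The paper closes this gap by following \cite{Birkner_et_al_2012}: it invokes their Lemma 3.9, a coupling of the dependent and independent $\Xi$-chains whose failure probability is controlled in terms of the current separation, and then uses Lemma \ref{lem:escape_time} \emph{after} the separation step to control the time the pair spends at small separations; you use Lemma \ref{lem:escape_time} only inside Lemma \ref{lem:separation_lemma} and never again.

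A second, smaller omission: the one genuinely new ingredient in the paper's proof is the replacement of the standard large deviation estimate (Equation (3.47) in \cite{Birkner_et_al_2012}) for $T_n^{\mathrm{sim}}$, which in the i.i.d.\ setting follows from Cram\'er's theorem. Here the increments $\tau_k^{\mathrm{sim}}$ are only mixing, and the paper derives $\tilde{\mathbb P}_{0,0}^{\mathrm{joint}}\left (T_n^{\mathrm{sim}}\geq Kn\right )\leq Ce^{-cn}$ by applying Markov's inequality to $\prod_{k\leq n}e^{\tau_k}$ and bounding the expectation of this product of dependent variables via Theorem 17.2.2 of \cite{Ibragimov_Linnik_1971}. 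Your appeal to per-increment exponential tails plus a ``renewal-type argument'' does not by itself give a linear-in-$n$ bound on $T_n^{\mathrm{sim}}$ with exponentially small failure probability, and such a bound is needed to convert between simultaneous regeneration indices, single-walk regeneration indices and real time throughout the comparison.
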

\begin{proof}
The proof remains almost the same as in \cite{Birkner_et_al_2012}, using the separation lemma, Lemma \ref{lem:separation_lemma}, and a coupling of dependent and independent $\Xi$-chains, introduced in their Lemma 3.9. Furthermore, we do not have standard large deviation estimates. Instead, we need use the Markov inequality together with an estimate on the expectation of the product of mixing random variables, e.g.~Theorem 17.2.2 in \cite{Ibragimov_Linnik_1971} to get
\begin{align*}
	\tilde {\mathbb P}_{0,0}^{\mathrm{joint}}\left (T_n^{\mathrm{sim}}\geq Kn\right )&\leq e^{-Kn}\tilde{\mathbb E}_{0,0}^{\mathrm{joint}} \left [ \prod_{k=1}^n e^{\tau_k^{\mathrm{joint}}}\right ]\\
	&\leq e^{-Kn}\sum_{k=0}^n \alpha_{2m}^{\frac{D}{D+2}}\tilde{\mathbb E}_{0,0}^{\mathrm{joint}}\left [ e^{\tau_k^{\mathrm{joint}}}\right ]^k\\
	&\leq n \alpha_{2m}^{\frac{D}{D+2}} \exp\left (\log\tilde{\mathbb E}_{0,0}^{\mathrm{joint}}\left [ e^{\tau_k^{\mathrm{joint}}}\right ]n-Kn  \right ).
\end{align*}
Since the time increments $\tau_n^{\mathrm{sim}}$ have exponential moments under the joint law by Equation \eqref{eq:increment_tail_bound}, we can choose the constant $K>\log\tilde {\mathbb E}_{0,0}^{\mathrm{joint}}[ e^{\tau_k^{\mathrm{sim}}}]$ to get exponential tail bounds
\begin{align}
	\tilde {\mathbb P}_{0,0}^{\mathrm{joint}}(T_n^{\mathrm{sim}}\geq Kn)\leq Ce^{-cn}.
\end{align}
This shows that Equation (3.47) in \cite{Birkner_et_al_2012} holds in our case and completes the proof by following their steps.
\end{proof}

\begin{proof}[Proof of Theorem \ref{thm:qCLT}]
As in \cite{Birkner_et_al_2012} we show that for any bounded Lipschitz function $f:\mathbb R^d\rightarrow \mathbb R$
\begin{align}\label{eq_quenched_law_goal}
	\left |  E_\xi \left [f\left ( \frac{X_n - n\vec\mu}{\sqrt n} \right )\right ] -\Phi(f)\right | \xrightarrow{n\rightarrow \infty }0 \quad \text{for }\tilde{\mathbb P}\text{-a.e.~}\xi^K,
\end{align}
where $\Phi(f)=\int f(x)\Phi(\mathrm{d}x)$ and $\Phi$ is a non-trivial $d$-dimensional normal law and $\vec \mu$ is as in Theorem \ref{thm:aCLT}. We do this by finding an upper bound of different terms and show that each of these terms converge individually as $n\rightarrow \infty$. Let $L_f$ be the Lipschitz constant of $f$ and write
\begin{align}
	&\left |  E_\xi \left [f\left ( \frac{X_n-n\vec\mu}{\sqrt n} \right )\right ] -\Phi(f)\right | \nonumber \\
	&\quad\leq  \left|  E_\xi \left [f\left ( \frac{X_n-n\vec\mu}{\sqrt n} \right )\right ] - E_\xi \left [f\left ( \frac{\tilde X_{[n/\mathbb E \tau_1]}-n\vec\mu}{\sqrt {n/\mathbb E \tau_1}} \frac{1}{\sqrt{\mathbb E \tau_1}}\right )\right ]  \right | \label{eq_quenched_law_triangle_inequality_1st} \\
	&\qquad + \left |  E_\xi \left [f\left ( \frac{\tilde X_{[n/\mathbb E \tau_1]}-n\vec\mu}{\sqrt {n/\mathbb E \tau_1}} \frac{1}{\sqrt{\mathbb E \tau_1}}\right )\right ] -\Phi(f)\right | \label{eq_quenched_law_triangle_inequality_2nd},
\end{align}
where we write $[n]$ for the integer part of an index $n$. In Term \eqref{eq_quenched_law_triangle_inequality_1st} we split the position of the random walk according to
\begin{align}
	\frac{X_n-n\vec\mu}{\sqrt n}={\frac{X_n-\tilde X_{V_n}}{\sqrt n}} + {\frac{\tilde X_{V_n}-\tilde X_{[n/\mathbb E[\tau_1]]}}{\sqrt n}} + 
	{\frac{\tilde X_{[n/\mathbb E[\tau_1]}-n\vec\mu}{\sqrt {n/\mathbb E[\tau_1]}}}\frac{1}{\sqrt{\mathbb E[\tau_1]}},
\end{align}
where
\begin{align*}
	V_n:=\max \{k>0: T_k\leq n\}.
\end{align*}
Conditioning on three suitable events (see terms (i)-(iii) in Equation \eqref{eq_quenched_law_triangle_inequality_1st_detail}) and using the properties of Lipschitz functions we get for constants $0< \gamma' < 1/2<\beta<1$ and $\gamma \in (\beta/2,1/2)$
\begin{align}\label{eq_quenched_law_triangle_inequality_1st_detail}	
	&\left|  E_\xi \left [f\left ( \frac{X_n-n\vec\mu}{\sqrt n} \right )\right ] - E_\xi \left [f\left ( \frac{\tilde X_{[n/\mathbb E \tau_1]}-n\vec\mu}{\sqrt {n/\mathbb E \tau_1}} \frac{1}{\sqrt{\mathbb E \tau_1}}\right )\right ]  \right |  \\
	&\leq \ L_f \frac{n^{\gamma'}+n^\gamma}{\sqrt n} + 2||f||_\infty \left (\underset{(i)}{P_\xi\left ( \left \|X_n-\tilde X_{V_n} \right \|\geq n^{\gamma'}\right )}+\underset{(ii)}{P_\xi\left (\left |V_n-\frac{n}{\mathbb E \tau_1}\right|\geq n^\beta\right )} \right )\nonumber \\
	&\qquad + 2||f||_\infty   \underset{(iii)}{P_\xi\left (\sup_{|k-n/\mathbb E \tau_1|<n^\beta} \left \|\tilde X_k-\tilde X_{[n/\mathbb E \tau_1]}\right \|\geq n^{\gamma}\right )}.  \nonumber
\end{align}
For term (i) in Equation \eqref{eq_quenched_law_triangle_inequality_1st_detail} we need to look at polynomial tails instead of logarithmic tails as in the original paper. For every $\epsilon>0$ we get from the Markov inequality and stationarity of the joint regeneration increments under the annealed law that
\begin{align*}
	&\tilde{\mathbb P}\left ( P_\xi \left ( \tau_n\geq n^{\gamma'}\right )>\epsilon \right ) \leq \frac{1}{\epsilon} \tilde{\mathbb E}\left ( P_\xi \left ( \tau_n\geq n^{\gamma'}\right ) \right ) = \frac{1}{\epsilon}\tilde{\mathbb P} \left ( \tau_n\geq n^{\gamma'}\right ) \leq \frac{C}{\epsilon}e^{-cn^{\gamma'}}
\end{align*}
is summable in $n$ for every $\epsilon>0$. 
We conclude by Borel-Cantelli that
\begin{align}
P_\xi\left ( \left \|X_n-\tilde X_{V_n}\right \| \geq n^{\gamma'}\right )  &\leq P_\xi\left ( \tau_{n} \geq n^{\gamma'}\right ) \xrightarrow{n\rightarrow \infty } 0 \text{ for }\tilde {\mathbb P}-\text{a.e.~} \xi^K,
\end{align}
since $||X_n-\tilde X_{V_n}|| \leq \tau_n$. For term (ii) in Equation \eqref{eq_quenched_law_triangle_inequality_1st_detail} we proceed as in \cite{Birkner_et_al_2012} and use their equations (3.64) and (3.67) to show almost sure convergence. Here we only need to remark that their Equation (3.64) holds by a law of the iterated logarithm for stationary mixing sequences, e.g.~Theorem 6.4 in \cite{Rio_2013}.
For term (iii) in Equation \eqref{eq_quenched_law_triangle_inequality_1st_detail} we use Equations (3.68) and (3.69) from \cite{Birkner_et_al_2012}, where Equation (3.69) holds by Inequality (I.6) in \cite{Rio_2013}.
Therefore, Term \eqref{eq_quenched_law_triangle_inequality_1st} converges for $\tilde{\mathbb P}$-a.e.~$\xi^K$ to $0$ as $n\rightarrow \infty$.

For Term \eqref{eq_quenched_law_triangle_inequality_2nd}, we choose $\Phi$ to be a rescaled normal law $\Phi(f(\cdot)):=\tilde \Phi ( f(\cdot / \sqrt{\mathbb E \tau_1}))$ and the almost sure convergence follows from Lemma \ref{lem:Birkner_3_10} together with Lemma 3.12 in \cite{Birkner_et_al_2012}. Lemma \ref{lem:Birkner_3_10} is used to control the covariance of two walks under $\mathbb P^{\mathrm{joint}}$ by comparing it to the variance of a single walker under the annealed law. Then Lemma 3.12 turns this into a quenched CLT for $\tilde X_n$. The proof of their Lemma 3.12 holds true in our case as there is a moderate deviation principle for stationary, strongly mixing sequences, Theorem 4 in \cite{Merlevede_Peligrad_Rio_2009}. We complete the proof of Theorem \ref{thm:qCLT} with the remark that any continuous bounded function can be approximated by bounded Lipschitz functions in a locally uniform way and Equation \eqref{eq_quenched_law_goal} holds for any continuous bounded function.
\end{proof}

\section{Open Questions and Further Work}
Many questions in this project remain open and can hopefully be answered with general progress in the field.  At this point we do not know, whether the percolation cluster only allows for easier proofs or whether it creates Brownian scaling limits in a broader class of weights $K$ compared to the full lattice. Therefore, one of the most important question to answer is, whether our aCLT can be extended to the full lattice $p=1$. While we failed to show non-degeneracy of the central limit theorem without additional assumptions on the moments of $K$, we are not aware of a counterexample either. 

We furthermore would like to establish whether our bound on the mixing coefficients in Theorem \ref{thm:aCLT} is sharp. So far we are not able to construct a suitable example since the percolation cluster destroys any trap.

Finally, the bound on the mixing coefficients in Theorem \ref{thm:qCLT} most certainly is not sharp and should probably depend on the dimension $d$. We would like to improve the bounds to polynomially mixing coefficients.

\section*{Acknowledgements}
I thank an unknown referee for his detailed review, which helped greatly to improve the paper. I would like to thank Noam Berger for his very simple and beautiful examples. Also, I am grateful to Renato Soares dos Santos and Florian Völlering for their remarks and helpful discussion. I am especially grateful to Nina Gantert for many critical questions and encouragement, and to the whole group for support and cake.

%\bibliographystyle{abbrvnatshortbib.bst}
%\bibliography{literature}

\end{document}